\newcommand\rnumber{\operatorname{-}}
\theoremstyle{definition}
\newtheorem{thm}{Theorem}
\newtheorem{cor}{Corollary}
\newtheorem{lem}{Lemma}
\newtheorem{rem}{Remark}
\newtheorem{defn}{Definition}
\newtheorem{exa}{Example}
\newcommand{\cX}{\underline{\bf X}}
\newcommand{\cY}{\underline{\bf Y}}
\newcommand{\cZ}{\underline{\bf Z}}
\newcommand{\A}{{\bf A}}
\newcommand{\x}{{\bf x}}
\newcommand{\X}{{\bf X}}
\newcommand{\Y}{{\bf Y}}
\newcommand{\B}{{\bf B}}
\newcommand{\C}{{\bf C}}
\newcommand{\Q}{{\bf Q}}
\newcommand{\U}{{\bf U}}
\newcommand{\V}{{\bf V}}
\begin{document}

\begin{frontmatter}

{\title{Fast randomized Kronecker tensor decomposition: algorithms and error analysis}}

\author[label1,label1n]{Salman Ahmadi-Asl}
\author[label2]{Naeim Rezaeian}
\author[label3]{André L. F. de Almeida}
\author[label4]{Yipeng Liu}

\affiliation[label1]{organization={Research Center of the Artificial Intelligence Institute, Innopolis University, 420500 Innopolis, Russia},
          }
          
\affiliation[label1n]{organization={Lab of Machine Learning and Knowledge Representation, Innopolis University, 420500 Innopolis, Russia},
          }
\affiliation[label2]{organization={Peoples' Friendship University of Russia, Moscow, Russia},
}
\affiliation[label3]{organization={Department of Teleinformatics Engineering, Federal University of Ceara, Fortaleza, Brazil},
           }    
\affiliation[label4]{organization={School of Information and Communication Engineering, University of Electronic Science
and Technology of China (UESTC), Chengdu, 611731, China},
           }

\begin{abstract}
This paper proposes fast randomized algorithms for computing the Kronecker Tensor Decomposition (KTD) by replacing the sequence of deterministic SVDs in the TTr1SVD framework with randomized SVDs incorporating oversampling and power iterations. The proposed algorithms can decompose a given tensor into the KTD format significantly faster than existing state-of-the-art deterministic methods. Our principal idea is to use randomization to reduce computational complexity while maintaining controlled accuracy. A detailed theoretical analysis is presented, including a recursive error bound that accounts for error propagation through the TTr1SVD tree structure. We prove that the expected Frobenius norm error is bounded by a sum of tail energies multiplied by factors that decay exponentially with the number of power iterations. Extensive simulations on synthetic and real-world datasets demonstrate several orders of magnitude acceleration compared to the deterministic approach, with applications to tensor completion, video/image compression, image denoising, and image super-resolution.
\end{abstract}

\begin{keyword}
Kronecker tensor decomposition (KTD), Randomized algorithms, TTr1SVD, Power iteration, Low-rank approximation.

\MSC 15A69 \sep 46N40 \sep 15A23 \sep 65F55 \sep 68W20 
\end{keyword}

\end{frontmatter}

\section{Introduction}  
Tensors, also known as multi-way arrays, generalize matrices to higher dimensions. They can be considered data structures for storing and manipulating multidimensional data. Tensors have applications in various fields, including mathematics, physics, computer science, and engineering. In computer science, tensors are widely used in machine learning and deep learning algorithms. They represent and manipulate multi-dimensional data such as images, videos, and text. Tensors are also used in natural language processing to represent and process words and sentences, capturing their sequential and hierarchical structure. The interested reader is referred to \cite{comon2009tensor,FavierAlmeida2014,cichocki2016tensor,asante2021matrix,ahmadi2023fast,Sidiropoulos2017,elden2019solving} and the references therein for more details about tensors and their applications.

Tensor decompositions allow the breaking down of a higher-order tensor into a set of lower-order tensors that capture the underlying structure and patterns in the data. 
Unlike the matrix case, where the concept of rank is unique, there are several types of tensor decompositions due to the lack of a unique concept of rank for tensors. Such decompositions include Canonical Polyadic decomposition (CPD) \cite{hitchcock1927expression}, HSOVD \cite{de2000multilinear}, Tensor Train (TT) decomposition \cite{oseledets2011tensor,Yassin_Andre}, Tensor Ring decomposition \cite{zhao2016tensor}, block term decomposition \cite{de2008decompositionsI}, constrained factor tensor decompositions \cite{Almeida_Elsevier_2007,confac,stegeman2009},
and Kronecker Tensor Decomposition (KTD) \cite{phan2012revealing,batselier2017constructive}. 
 
The KTD has proven to be a versatile tool, with applications in data completion~\cite{phan2013tensor}, feature extraction~\cite{phan2013basis}, data compression~\cite{batselier2017constructive}, hypergraph analysis~\cite{pickard2023kronecker}, large-language model compression~\cite{tahaei2022kroneckerbert,edalati2022krona} and the development of lightweight recurrent neural networks~\cite{wang2021kronecker}. For example, in \cite{batselier2017constructive}, it is empirically shown that the KTD can provide a much better compression ratio than the HOSVD. However, existing algorithms for computing the KTD are not scalable and struggle with large-scale data tensors. Motivated by the interesting applications of the KTD and the lack of scalable algorithms for it, we develop fast, randomized algorithms to decompose a tensor into the KTD format. 

To the best of our knowledge, this is the first work that not only proposes a fast randomized algorithm for KTD but also provides a rigorous error propagation analysis through the recursive TTr1SVD tree structure, establishing explicit bounds that quantify the trade-off between computational efficiency and approximation accuracy. Indeed, randomized algorithms for tensor decomposition offer several advantages over traditional deterministic methods. They are often faster and more memory-efficient, making them well-suited for large-scale datasets. Additionally, randomized algorithms can provide approximate tensor decompositions with controlled accuracy, allowing for trade-offs between computational cost and solution quality. For randomized approaches to various tensor decompositions, see \cite{ahmadi2020randomized,ahmadi2021randomized,che2019randomized,minster2020randomized,zhang2018randomized}. Randomized and sampling-based methods have recently gained traction for scalable low-rank tensor decompositions. Zhang et al.~\cite{zhang2023randomized} proposed randomized sampling to accelerate low-tubal-rank plus sparse tensor recovery, avoiding full tensor SVD computations. Qin et al.~\cite{qin2024nonconvex} embedded randomized SVD into a nonconvex framework for robust high-order tensor completion, reducing time and memory costs. Larsen and Kolda~\cite{larsen2022practical} introduced leverage-based sampling for the CPD, selecting informative subtensors via approximate leverage scores. Malik and Becker~\cite{malik2018low} developed TensorSketch, a count-sketch method for Tucker decomposition with near-linear time complexity. For tensor ring decomposition, Malik and Becker~\cite{malik2021sampling} proposed a sampling-based method that selectively samples fibers, avoiding large intermediate matrices and scaling to high-order tensors. Together, these works show that randomized techniques—sketching, leverage sampling, and fiber sampling—enable scalable tensor decompositions across multiple formats (Tucker, tensor ring, and tubal rank). Also, see \cite{ahmadi2024adaptive} and \cite{ahmadi2024robust} for sampling-based methods for the T-SVD model.

It is worth highlighting that randomized algorithms are applicable to computing low-rank approximations of data tensors with low-rank structures, such as images and videos.

We can summarize our main contributions as follows:

 \begin{itemize}
     \item {Proposing the first fast randomized algorithms for computing the KTD, replacing all deterministic SVDs in the TTr1SVD framework with randomized SVDs.}
     
     \item {Providing a rigorous theoretical error analysis of the proposed randomized KTD algorithm, including a recursive error bound that accounts for propagation through the TTr1SVD tree structure and demonstrating exponential decay of the error with the number of power iterations.}
     
     \item {Presenting a detailed complexity analysis showing reduction from $\mathcal{O}(I^{N+1})$ for deterministic KTD to $\mathcal{O}(I^N R)$ for our randomized approach, with explicit comparison to traditional methods.}
     
     \item {Validating the proposed algorithms through extensive simulations on synthetic and real-world datasets, demonstrating several orders of magnitude acceleration while maintaining competitive accuracy.}
 \end{itemize}

The outline of this paper is as follows. The preliminaries are presented in Section \ref{Sec:pre}. In Section \ref{Sec:KTD}, we introduce the KTD and its main properties. The randomization method for low-rank approximation of matrices is discussed in Section \ref{Sec:rand_matrix}. Section \ref{RKTD} is devoted to presenting the proposed randomized algorithms for computing the KTD. The simulations are outlined in Section \ref{Sec:Sim}, and finally, the conclusions follow in Section \ref{Sec:Con}.

\section{Preliminaries}\label{Sec:pre}
In this study, we utilize the same notations as in \cite{cichocki2016tensor}. Thus, we use an underlined bold capital letter, a bold capital letter, and a bold lowercase letter to represent a tensor, a matrix, and a vector. $\|.\|$ represents the Frobenius norm of matrices or tensors. The spectral norm of matrices and the Euclidean norm of vectors are represented by the notation $\|.\|_2$. $\mathbb{E}$ represents the mathematical expectation. The following definitions are now provided, which are necessary for our presentation. 

A tensor can be unfolded or metricized along its different modes. For a given tensor $\cX\in\mathbb{R}^{I_1\times I_2\times \cdots\times I_N}$, the $n$-mode unfolding is denoted by $\X_{(n)}\in\mathbb{R}^{I_n\times I_1\ldots i_{n-1}I_{n+1}\ldots I_N}$ and is obtained by stacking the $n$-mode fibers\footnote{This means we fix all modes except mode $n$.} In MATLAB, this can be computed through reshaping and permutation as follows:
\begin{eqnarray*}
 \cY &\leftarrow& {\rm permute}(\cX,[n,1,\ldots,n-1,n+1,\ldots,N]),\\
 \X_{(n)} &\leftarrow& {\rm reshape}(\cY,[I_n,I_1\ldots I_{n-1}I_{n+1}\ldots I_N]).
\end{eqnarray*}
A matrix is transformed into a vector via a so-called vectorization operation. The vectorization of a matrix is performed by stacking its columns one by one. The vectorization operation for tensors is the process of reordering the elements of a tensor to a vector. The notation ``vec'' denotes the vectorization operation of matrices and tensors. The vectorization process of a tensor $\cX$ can be computed based on the matrix vectorization via ${\rm vec}(\cX)={\rm vec}(\X_{(1)})$. 

The Kronecker product is denoted by the symbol $\otimes$, and is defined as 
$$\C=\A\otimes \B=\begin{bmatrix}
 a_{11}\B & \cdots &  a_{1n}\B\\
 \vdots & \dots & \vdots\\
a_{m1}\B & \dots & a_{mn}\B 
\end{bmatrix}\in\mathbb{R}^{mp\times nq},$$ 
where $\A\in\mathbb{R}^{m\times n}$ and $\B\in\mathbb{R}^{p\times q}$ are two matrices, and $a_{ij}$ is an element of the matrix $\A$. 

\begin{defn}\label{def:outer}
(Outer product of tensors) The outer product of tensors $\cX\in\mathbb{R}^{I_1\times I_2\times\cdots\times I_N}$ and $\cY\in\mathbb{R}^{J_1\times J_2\times \cdots\times J_M}$ is denoted by $\cZ=\cX\circ\cY$ an ($N+M$)-th order tensor of size $I_1\times\cdots\times I_N\times J_1\times\cdots\times J_M$. The elements of the tensor $\cZ$ can be presented as  $z_{i_1,\cdots,i_N,j_1,\ldots,j,_M}=x_{i_1,\cdots,i_N}y_{j_1,\cdots,j_M}$.     
\end{defn}
From definition \ref{def:outer}, we see that the outer product of two and three vectors gives a matrix and a third-order tensor, respectively. It can be easily shown that 
\begin{eqnarray}\label{vecm}
{\rm vec}({\bf a}\circ{\bf b})={\bf b}\otimes{\bf a},
\end{eqnarray}
and in general, we have 
\begin{eqnarray}
 {\rm vec}(\underline{\bf A}\circ\underline{\bf B})={\rm vec}(\underline{\bf B})\otimes{\rm vec}(\underline{\bf A}).   
\end{eqnarray}
The tensor Kronecker product is a generalization of the matrix Kronecker product introduced in \cite{phan2012revealing,batselier2017constructive}. We use the same notation for the tensor Kronecker product. The tensor Kronecker  product is defined as follows
\begin{defn}
(Tensor Kronecker product) \cite{phan2012revealing}. Let $\cX\in\mathbb{R}^{J_1\times J_2\times \cdots \times J_N}$ and $\cY\in\mathbb{R}^{K_1\times K_2\times \cdots\times K_N}$. The Kronecker tensor product of $\cX$ and $\cY$ is an $N$-th order tensor expressed as, $\cZ=\cX\otimes \cY\in\mathbb{R}^{I_1\times I_2\times \cdots\times I_N},\,I_n=J_nK_n$ and its elements are defined as $z_i=x_jy_k$ where $i=[i_1,i_2,\ldots,i_N],\,j=[j_1,j_2,\ldots,j_N],\,k=[k_1,k_2,\ldots,k_N]$ and $i_n=k_n+(j_n-1)K_n$.    
\end{defn}
It is not difficult to check that partitioning $\underline{\bf Z}$ to a block tensor of size $J_1\times J_2\times \cdots \times J_N$, each block-$j$ ($j=[j_1,j_2,\ldots,j_N]$) can be represented as $x_j\cY$ \cite{phan2012revealing}. In \cite{batselier2015constructive}, the authors use an index-merging strategy to extend the matrix Kronecker product to the tensor case. Starting with the matrix Kronecker product, for two matrices $\A\in\mathbb{R}^{I\times J}$ and $\B\in\mathbb{R}^{K\times L}$, the Kronecker product $\C=\A\otimes\B\in\mathbb{R}^{IK\times JL}$, can be represented element-wise as $\C_{[i_1i_3][i_2i_4]}=\A_{i_3i_4}\B_{i_1i_2}$ where $[i_1i_3]=i_1+K(i_3-1)$ and $[i_2i_4]=i_2+L(i_4-1)$. Using this idea, the tensor Kronecker product of two tensors $\cX\in\mathbb{R}^{J_1\times J_2\times\cdots\times J_N}$ and $\cY\in\mathbb{R}^{K_1\times K_2\times\cdots\times K_N}$, that is $\cZ=\cX\otimes\cY\in\mathbb{R}^{J_1K_1\times J_2K_2\times J_NK_N}$ can be represented element-wise as 
\begin{eqnarray}
\cZ_{[i_1i_{N+1}][i_2i_{N+2}]\cdots[i_Ni_{2N}]}=\cX_{i_{N+1}i_{N+2}\cdots i_{2N}}\cY_{i_1i_2\cdots i_N},    
\end{eqnarray}
where $[i_ni_{N+n}]=i_n+(i_{N+n}-1)K_n,\,n=1,2,\ldots,N$. The Kronecker product of two tensors can be computed in MATLAB using vectorization and reshaping/permutation. More precisely, let $\cX\in\mathbb{R}^{I_1\times I_2\times \cdots\times I_N}$ and $\cY\in\mathbb{R}^{J_1\times J_2\times \cdots\times J_N}$ be given; then we need to first vectorize them as $\cX(:)$ and $\cY(:)$ and then compute their classical Kronecker product as ${\bf c}=\cX(:)\otimes\cY(:)$. Next, the long vector ${\bf c}$ is reshaped to a tensor of size $J_1\times \cdots\times J_N\times I_1\times\cdots\times I_N$ and then permute the resulting tensor with the permutation vector ${\bf p}$ as
\begin{eqnarray}\label{permu}
 {\bf p}=[1,N+1,2,N+2,3,N+3\ldots,N,2N].
\end{eqnarray}
Finally, the permuted tensor is reshaped to an $N$-order tensor of size $I_1J_1\times I_2J_2\times \cdots\times I_NJ_N$.
The MATLAB code for this operation and related useful functions is available in the GitHub repository {https://github.com/kbatseli/TKPSVD}.

The following lemma can be proved using the fact \eqref{vecm} and will be used in our analyses.
\begin{lem}\label{lm1}
Let ${\bf a},\,{\bf b}$ and ${\bf c}$ be three vectors; then ${\rm vec}({\bf a}\circ{\bf b}\circ{\bf c})={\bf c}\otimes{\bf b}\otimes{\bf a}$.   
\end{lem}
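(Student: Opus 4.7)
The plan is to reduce to the two-vector case of \eqref{vecm} by peeling off one factor and invoking associativity of the outer product together with associativity of the Kronecker product. The computation is short, so the only point requiring real care is matching the column-major indexing convention used when vectorizing a third-order tensor.

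First I would group the factors as $({\bf a}\circ{\bf b})\circ{\bf c}$; this is legitimate because the outer product, being defined entrywise in Definition \ref{def:outer}, is associative. Writing $\cX={\bf a}\circ{\bf b}\circ{\bf c}$ so that $\cX_{ijk}=a_ib_jc_k$, I would then compute the mode-1 unfolding $\X_{(1)}$. Its column indexed by the pair $(j,k)$ in the column-major linear order $j+(k-1)J$ is exactly the fiber $(a_ib_jc_k)_i=b_jc_k\,{\bf a}$. Concatenating these columns and using \eqref{vecm} applied to ${\bf b}\circ{\bf c}$ to identify the column-major arrangement of the scalars $b_jc_k$ with ${\bf c}\otimes{\bf b}$, one obtains the rank-one factorization $\X_{(1)}={\bf a}\,({\bf c}\otimes{\bf b})^T$.

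To finish, I would use the identity ${\rm vec}(\cX)={\rm vec}(\X_{(1)})$ from the preliminaries and apply \eqref{vecm} a second time, this time to the rank-one matrix $\X_{(1)}$ regarded as the outer product ${\bf a}\circ({\bf c}\otimes{\bf b})$. This yields ${\rm vec}(\cX)=({\bf c}\otimes{\bf b})\otimes{\bf a}={\bf c}\otimes{\bf b}\otimes{\bf a}$, where the last equality is associativity of the Kronecker product. The main obstacle is the bookkeeping of the linear index in the unfolding: one must verify that stacking the mode-1 fibers in the order $j+(k-1)J$ produces ${\bf c}\otimes{\bf b}$ rather than ${\bf b}\otimes{\bf c}$. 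Once that ordering is pinned down, the two invocations of \eqref{vecm} close the argument.
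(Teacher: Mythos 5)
Your argument is correct and follows essentially the same route the paper intends: the paper simply asserts that the lemma "can be proved using the fact \eqref{vecm}," and the chain you derive, $\mathrm{vec}(\underline{\bf X})=\mathrm{vec}({\bf X}_{(1)})=\mathrm{vec}({\bf a}\circ({\bf c}\otimes{\bf b}))=({\bf c}\otimes{\bf b})\otimes{\bf a}$, is exactly the identification the paper records afterwards in \eqref{rela}. Your extra care with the column-major index $j+(k-1)J$ in the mode-1 unfolding is the right detail to pin down and does not change the approach.
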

From Lemma \ref{lm1}, we can see
\begin{eqnarray}\label{rela}
{\rm vec}(\cX)={\rm vec}({\bf X}_{(1)})={\bf c}\otimes{\bf b}\otimes{\bf a}={\rm vec}({\bf a}\circ({\bf c}\otimes{\bf b})),    
\end{eqnarray}
which will be used in Section \ref{Sec:KTD}.

\section{Kronecker tensor decomposition}\label{Sec:KTD}
The KTD is a technique that expresses a higher-order tensor as a Kronecker product of smaller factor tensors. First introduced by Phan et al. \cite{phan2012revealing}, KTD extends Kronecker matrix decomposition (KMD) \cite{van1993approximation} to higher-order tensors. It is particularly useful for simplifying tensor representations, reducing computational complexity, and supporting applications in structural pattern extraction \cite{phan2013basis}, and tensor completion \cite{phan2013tensor}.

Beyond its original formulation, the KTD has been adapted to diverse fields. For instance, KMD has been applied to large-language model compression \cite{tahaei2022kroneckerbert,edalati2022krona}, while Batselier \cite{batselier2017constructive} developed a constructive algorithm for computing the KTD using a CPD of a reshaped tensor with orthogonal rank-1 terms, based on the TTr1SVD \cite{batselier2015constructive,salmi2009sequential}. This approach also enabled rigorous error analysis and demonstrated the method's effectiveness in applications such as image compression.

The effectiveness of KMD has been substantiated in several studies. For instance, \cite{cai2022kopa} compared low-rank approximations of a standard \(512 \times 512\) cameraman image. Their results showed that while the optimal rank-1 approximation using Singular Value Decomposition (SVD) captured only \(45.63\%\) of the image's total variation, a Kronecker-based rank-1 approximation with a \((16 \times 32) \otimes (32 \times 16)\) patch structure captured \(77.55\%\)—a substantial gain achieved without increasing model complexity, as both approximations used the same number of parameters (1023). A visual comparison is provided in Fig.~\ref{fig:placeholder}.

\begin{figure}[ht]
    \centering
    \includegraphics[width=0.6\linewidth]{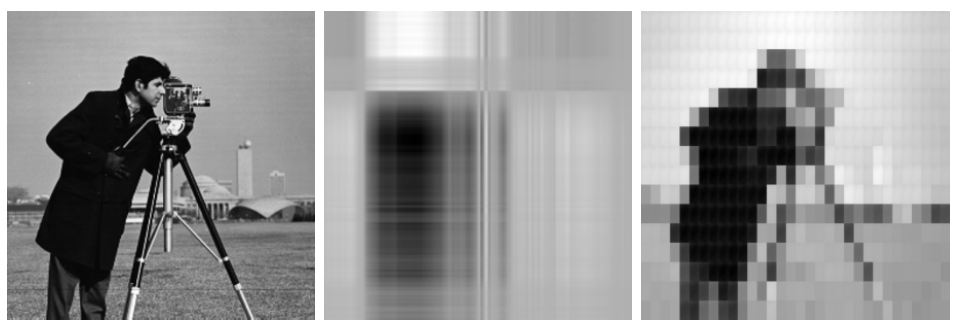}
    \caption{(Left) Original cameraman image; (Middle) SVD rank-1 approximation; (Right) KPD rank-1 approximation. Reproduced from \cite{cai2022kopa}.}
    \label{fig:placeholder}
\end{figure}

Inspired by the success of KMD, its tensor counterpart—(KTD)—has been introduced and applied across domains such as signal processing, image analysis, and machine learning. The KTD facilitates meaningful feature extraction and dimensionality reduction in high-dimensional data. For example, \cite{wang2021kronecker} applied the KTD to compress recurrent neural networks, demonstrating its utility in model compression. Further applications continue to underscore the versatility of the decomposition.

The KTD of an $N$-th order tensor $\cX\in\mathbb{R}^{I_1\times I_2\times \cdots\times  I_N}$ admits the model
\begin{eqnarray}
\cX\approx \sum_{r=1}^R \sigma_r\, \cX^{(1)}_r \otimes \cX^{(2)}_r \otimes \cdots \otimes \cX^{(M)}_r,
\end{eqnarray}
where $\cX^{(n)}_r\in\mathbb{R}^{J^{(m)}_1\times J^{(m)}_2\times \cdots\times J^{(m)}_N}$ and 
\begin{eqnarray}
||\cX^{(m)}_r||_F=1,\quad \prod_{m=1}^M {J^{(m)}_n}=I_n,\quad n=1,2,\ldots,N.    
\end{eqnarray}
The normalization $\|\cX^{(m)}_r\|_F = 1$ removes scaling ambiguity inherent in the Kronecker product representation, as any scaling factor can be absorbed into the singular value $\sigma_r$. This is standard in KTD literature and ensures the uniqueness of the decomposition up to permutation.
The minimum number of terms representing a tensor in the KTD format is called the KTD rank. Computing the KTD rank is NP-hard, similar to the CPD rank, as there is a close relation between the KTD and CPD shown in \cite{phan2012revealing,batselier2017constructive}. Due to the special index-merging structure of the KT product, one can reformulate the computation of the KTD as the CPD of a new tensor via reshaping and permutation. To be more precise, let $\cX\in\mathbb{R}^{J_1\times J_2\times J_3}$ and $\cY\in\mathbb{R}^{K_1\times K_2\times K_3}$, and $\cZ=\cX\otimes\cY\in\mathbb{R}^{J_1K_1\times J_2K_2\times J_3K_3}$ for which we have $\cZ_{[i_1i_4][i_2i_5][i_3i_6]}=\cX_{i_4i_5i_6}\cY_{i_1i_2i_3}$. By a sequence of reshaping and permutation described in the following
\begin{eqnarray*}
\cZ_{[i_1i4][i_2i_5][i_3i_6]}\xrightarrow{\rm Reshaping}\cZ_{i_1,i_4,i_2,i_5,i_3,i_6}\xrightarrow{\rm Permuting}\\
\cZ_{i_1,i_2,i_3,i_4,i_5i_6}\xrightarrow{\rm Reshaping}\cZ_{[i_1i_2i_3][i_4i_5i_6]},
\end{eqnarray*}
we can easily see that the KT product of two three-order tensors after these processes converted to the outer product of the vectorizations of the tensor $\cY$ and $\cX$. So, the KTD of the tensor $\cZ$ can be represented as a rank-1 matrix approximation, which can be computed via the SVD, a special case of CPD. Similarly, we can always convert the KTD computation to the CPD after the reshaping and permutation described above. The next theorem illustrates this fact.

\begin{thm}\label{th1} \cite{batselier2017constructive} Given an $N$-th order tensor $\cX\in\mathbb{R}^{I_1\times I_2\times \cdots\times I_N}$, if 
\begin{eqnarray}\label{eq_ktd}
\cX&\approx& \sum_{r=1}^R \sigma_r\, \cX^{(1)}_r \otimes \cX^{(2)}_r \otimes \cdots \otimes \cX^{(M)}_r,\\\nonumber
{\rm and}\\\label{eq_ktd2}
\widetilde{\cX} &\approx& \sum_{r=1}^R \sigma_r\, \x_r^{(M)}\circ\x_r^{(M-1)}\circ\cdots\circ\x_r^{(1)},
\end{eqnarray}
where $\widetilde{\cX}$ is the permutation of $\cX$ such that the indices of the $\x_r^{(n)}$ vectors are identical to those of the $N$-th order tensor $\cX^{(i)}_r$ tensors, then $\x_r^{(n)}={\rm vec}(\cX^{(n)}_r)$ for all $r=1,2,\ldots,R$ and $n=1,2,\ldots,N.$
\end{thm}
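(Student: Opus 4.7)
The plan is to trace, term by term in the sum \eqref{eq_ktd}, how the reshaping/permutation operation that carries $\cX$ to $\widetilde{\cX}$ acts on a single Kronecker product $\cX_r^{(1)}\otimes\cX_r^{(2)}\otimes\cdots\otimes\cX_r^{(M)}$, and to show that it produces exactly the rank-one outer product $\operatorname{vec}(\cX_r^{(M)})\circ\operatorname{vec}(\cX_r^{(M-1)})\circ\cdots\circ\operatorname{vec}(\cX_r^{(1)})$ appearing in \eqref{eq_ktd2}. Since both the Kronecker tensor product and the reshape/permute operations are multilinear, it suffices to establish the claim on one term; summing over $r$ with the scalars $\sigma_r$ then gives the stated equivalence.

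First, I would fix notation at the level of entries. Using the index-merging form recalled in the excerpt, the entries of a two-factor KT product $\cX\otimes\cY$ take the shape $\cZ_{[i_1i_{N+1}][i_2i_{N+2}]\cdots[i_Ni_{2N}]}=\cX_{i_{N+1}\cdots i_{2N}}\cY_{i_1\cdots i_N}$, and the same pattern extends inductively to $M$ factors: each of the $N$ ``big'' indices of $\cX_r^{(1)}\otimes\cdots\otimes\cX_r^{(M)}$ unfolds into $M$ sub-indices, one per factor. Step one of my plan is to apply the \emph{Reshaping} that splits each big index into these $M$ sub-indices, producing an $NM$-th order tensor whose entry is literally the product of $M$ entries, one from each $\cX_r^{(m)}$.

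Step two is to apply the \emph{Permutation} that regroups the sub-indices by factor, so that all $N$ indices belonging to $\cX_r^{(M)}$ come first, then those of $\cX_r^{(M-1)}$, and so on down to $\cX_r^{(1)}$; this is precisely the permutation ${\bf p}$ that the excerpt uses in its MATLAB description of the tensor Kronecker product, iterated across $M$ factors. Step three is a final \emph{Reshaping} that collapses each of these $M$ blocks of $N$ indices back into a single long index. Because the entry has already been factored as a product of $M$ scalars with disjoint index dependencies, this collapse converts the tensor to the outer product $\operatorname{vec}(\cX_r^{(M)})\circ\cdots\circ\operatorname{vec}(\cX_r^{(1)})$; the reversal of the factor order is exactly the reversal one sees already in \eqref{vecm} and in Lemma~\ref{lm1}, which I would invoke as the one- and two-factor base cases of an induction on $M$.

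The main obstacle, and the only place the argument is not purely mechanical, is the bookkeeping that confirms the composition of reshape/permute/reshape acts identically on every term of the sum—that is, that the permutation vector ${\bf p}$ does not depend on $r$ or on the particular factor entries, only on the shapes $J_n^{(m)}$. I would settle this by writing the composite map as a single linear permutation on the vectorization of $\cX$, and observing that both sides of \eqref{eq_ktd} and \eqref{eq_ktd2}, after vectorization, are equated by that same permutation thanks to the generalized identity $\operatorname{vec}(\cX\otimes\cY)=P(\operatorname{vec}(\cY)\otimes\operatorname{vec}(\cX))$ for a fixed perfect-shuffle permutation $P$ determined solely by the dimensions. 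Once this is established, comparing \eqref{eq_ktd} and \eqref{eq_ktd2} mode by mode and factor by factor forces $\x_r^{(n)}=\operatorname{vec}(\cX_r^{(n)})$ for every $r$ and $n$, completing the argument.
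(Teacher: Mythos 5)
Your proposal is correct and follows essentially the same route as the paper, which itself only sketches the argument (via the two-factor, third-order example $\cZ_{[i_1i_4][i_2i_5][i_3i_6]}=\cX_{i_4i_5i_6}\cY_{i_1i_2i_3}$ and the reshape--permute--reshape chain) before deferring to \cite{batselier2017constructive}; your term-by-term index-merging argument, with Lemma~\ref{lm1} as the base case and the observation that the permutation depends only on the shapes $J_n^{(m)}$, is the natural completion of exactly that sketch. The only caveat is that the literal conclusion $\x_r^{(n)}={\rm vec}(\cX_r^{(n)})$ additionally presumes the two expansions are matched term by term (i.e., uniqueness of the CPD terms up to ordering), but that is a feature of the theorem as stated rather than a gap in your argument.
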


We now describe the computational procedure for the KTD. According to Theorem~1, computing the KTD involves three main steps: (1) reshaping and permuting the original tensor, (2) computing a canonical polyadic decomposition (CPD) of the rearranged tensor, and (3) reshaping the resulting factor vectors into the core tensors of the KTD. The central step is therefore the computation of a CPD of the form in \eqref{eq_ktd2}.

Several established algorithms can compute such a CPD, including CP-ALS~\cite{harshman1970foundations,carroll1970analysis}, methods incorporating line search and extrapolation~\cite{rajih2008enhanced,tomasi2006practical,chen2011new}, and compression-based approaches~\cite{kiers1998three}. These methods do not inherently enforce orthogonality among the rank-1 terms in \eqref{eq_ktd2}. However, for tractable error analysis, orthogonality is required, as discussed in \cite{batselier2015constructive}. A CPD with orthogonal rank-1 terms is known as a TTr1 decomposition, and an efficient algorithm, TTr1SVD, has been proposed for its computation. Its MATLAB implementation is available at \url{https://github.com/kbatseli/TKPSVD}.

We now briefly outline the TTr1SVD algorithm. For clarity, consider a third-order tensor $\underline{\bf X}\in\mathbb{R}^{I_1\times I_2\times I_3}$, and let $\mathbf{X}_{(1)}\in\mathbb{R}^{I_1\times I_2 I_3}$ denote its mode-1 unfolding. The truncated SVD of $\mathbf{X}_{(1)}$ is given by
\begin{equation}\label{eq:svd1}
\mathbf{X}_{(1)}\approx\sigma_1 \mathbf{u}_1\circ\mathbf{v}_1 + \sigma_2 \mathbf{u}_2\circ\mathbf{v}_2 + \cdots + \sigma_R \mathbf{u}_R\circ\mathbf{v}_R,
\end{equation}
where $\mathbf{u}_i\in\mathbb{R}^{I_1}$ and $\mathbf{v}_i\in\mathbb{R}^{I_2 I_3}$. Next, each right singular vector $\mathbf{v}_i$ is reshaped into a matrix $\mathbf{V}_i = \operatorname{reshape}(\mathbf{v}_i, [I_2, I_3])$, and its truncated SVD is computed:
\begin{equation}\label{eq:svd2}
\mathbf{V}_i \approx \sigma_{i1} \mathbf{u}_{i1}\circ\mathbf{v}_{i1} + \sigma_{i2} \mathbf{u}_{i2}\circ\mathbf{v}_{i2} + \cdots + \sigma_{iR'} \mathbf{u}_{iR'}\circ\mathbf{v}_{iR'}, \quad i=1,\dots,R.
\end{equation}
Vectorizing both sides of \eqref{eq:svd2} and substituting into \eqref{eq:svd1} yields
\begin{equation}
\mathbf{X}_{(1)}\approx\sum_{r=1}^{R}\sum_{r'=1}^{R'}\sigma_r\sigma_{r'r}\,\mathbf{u}_r \circ (\mathbf{v}_{r'r}\otimes \mathbf{u}_{r'r}).
\end{equation}
Finally, reshaping $\mathbf{X}_{(1)}$ back to a tensor of size $I_1\times I_2\times I_3$ gives the TTr1 decomposition
\begin{equation}\label{eq:out}
\underline{\bf X}\approx\sum_{r=1}^{R}\sum_{r'=1}^{R'}\sigma_r\sigma_{r'r}\,\mathbf{u}_r \circ \mathbf{u}_{r'r} \circ \mathbf{v}_{r'r},
\end{equation}
where we have used the identity \eqref{rela}.

This recursive process generates a tree structure, illustrated in Figure ~\ref{fig:ttr1_tree} for $R=3$ and $R'=2$. The first SVD (level 0) is applied to $\mathbf{X}_{(1)}$. At level 1, SVD is applied to each matrix $\mathbf{V}_i$ obtained from level 0. Subsequent levels are defined analogously.

\begin{figure}[htbp]
    \centering
    \begin{tikzpicture}[
        level distance=1.5cm,
        level 1/.style={sibling distance=4cm},
        level 2/.style={sibling distance=2cm},
        level 3/.style={sibling distance=1cm},
        every node/.style={draw, circle, minimum size=0.7cm, inner sep=1pt},
        edge from parent/.style={draw, -Stealth, thick},
        leaf/.style={draw=none, rectangle, minimum size=0cm, inner sep=2pt}
    ]
    
    \node (root) {$0$}
        child { node {$\sigma_{1}$}
            child { node {$\sigma_{11}$}
                child { node[leaf] {$\tilde{\sigma}_1$} }
                child { node[leaf] {$\tilde{\sigma}_2$} }
            }
            child { node {$\sigma_{12}$}
                child { node[leaf] {$\tilde{\sigma}_3$} }
                child { node[leaf] {$\tilde{\sigma}_4$} }
            }
        }
        child { node {$\sigma_{2}$}
            child { node {$\sigma_{21}$}
                child { node[leaf] {$\tilde{\sigma}_5$} }
                child { node[leaf] {$\tilde{\sigma}_6$} }
            }
            child { node {$\sigma_{22}$}
                child { node[leaf] {$\tilde{\sigma}_7$} }
                child { node[leaf] {$\tilde{\sigma}_8$} }
            }
        }
        child { node {$\sigma_{3}$}
            child { node {$\sigma_{31}$}
                child { node[leaf] {$\tilde{\sigma}_9$} }
                child { node[leaf] {$\tilde{\sigma}_{10}$} }
            }
            child { node {$\sigma_{32}$}
                child { node[leaf] {$\tilde{\sigma}_{11}$} }
                child { node[leaf] {$\tilde{\sigma}_{12}$} }
            }
        };

    \end{tikzpicture}
    \caption{The visualization of the TTr1 decomposition tree representation. Here, each $\tilde{\sigma}_i$ is the product of all nodes down a branch, for example $\tilde{\sigma}_{1}={\sigma}_{1}\sigma_{11}$.}
    \label{fig:ttr1_tree}
\end{figure}

Observe that \eqref{eq:out} is a CPD of the tensor $\underline{\bf X}$ with orthogonal rank-1 terms; a proof of orthogonality is provided in \cite{batselier2017constructive}. A key feature of this formulation is that the singular values $\sigma_r \sigma_{r'r}$ in \eqref{eq:out} behave analogously to singular values in matrix SVD, allowing truncation of the summation when these values become relatively small. 

If a tensor $\underline{\bf X}$ admits an exact representation \eqref{eq_ktd}, then the relative approximation error of the orthogonal CPD with truncation rank $R'$ is given by
\begin{equation}\label{rela:for}
\frac{\|\widetilde{\underline{\bf X}} - \sum_{r=1}^{R'} \sigma_r\, \mathbf{x}_r^{(1)} \circ \mathbf{x}_r^{(2)} \circ \cdots \circ \mathbf{x}_r^{(M)}\|_F}{\|\underline{\bf X}\|_F} = \frac{\sqrt{\sigma_{R'+1}^2 + \cdots + \sigma_R^2}}{\sqrt{\sigma_1^2 + \ldots + \sigma_R^2}},
\end{equation}
where the overall KTD rank is given by the number of non-zero singular values.
Thanks to the equivalence established in Theorem~\ref{th1} between KTD and CPD, this error bound also applies to the KTD with orthogonal rank-1 terms. The TTr1SVD procedure is summarized in Algorithm~\ref{ALG:TTr1SVD}.

Although KTD and CPD are related mathematically through tensor reshaping, KTD explicitly exploits Kronecker product structure, which arises naturally in applications such as spatio-temporal modeling, multi-linear operators, and separable systems. For example, experiments in \cite{cai2022kopa} show that KMD significantly outperforms SVD with the same number of parameters, highlighting that, although the models are mathematically connected via reshaping, KTD offers better representational efficiency in practice.

\RestyleAlgo{ruled}
\LinesNumbered
\begin{algorithm}
\SetKwInOut{Input}{Input}
\SetKwInOut{Output}{Output}\Input{The data tensor $\underline{\mathbf X} \in {\mathbb{R}^{{I_1} \times {I_2} \times \cdots \times {I_N}}}$, the dimensions $J^{(m)}_n,\,m=1,2,\ldots,M,\,n=1,2,\ldots,N$ and a KTD rank.} 
\Output{Singular values $\sigma_1,\sigma_2,\ldots,\sigma_R$ and tensors $\cX_r^{(n)},\,\,r=1,2,\ldots,R,\,n=1,2,\ldots,N$.}
\caption{The KTD of a tensor $\underline{\bf X}$}\label{ALG:KTD}
      {
$\cY\leftarrow$ Reshape ($\cX$,$[J_1^{(1)},\ldots,J_N^{(1)},J^{(2)}_1\ldots,J_N^{(2)},\ldots,J_1^{(M)},\ldots,J_N^{(M)}]$);\\
  $\cY \leftarrow$ Permute $(\cY,{\bf p})$;\,\, ${\bf p}$ was introduced in \eqref{permu};\\
   Reshape $\cY$ to size $I_1\times I_2\times \cdots\times I_N$;\\
$[\sigma_1,\ldots,\sigma_R,\x_1^{(1)},\ldots,\x_R^{(N)}]\leftarrow$ Compute the CPD of $\cY$ with the CPD rank $R$ and orthogonal rank-1 terms;\\
  \For{all nonzero $\sigma_r$}
{$\cX_r^{(n)}\leftarrow$ Reshape($\x_r^{(n)},\,[J^{(r)}_1,J^{(r)}_2,\ldots,J^{(r)}_N]$);
}
       	}       	
\end{algorithm}

\RestyleAlgo{ruled}
\LinesNumbered
\begin{algorithm}
\SetKwInOut{Input}{Input}
\SetKwInOut{Output}{Output}\Input{The data tensor $\underline{\mathbf X} \in {\mathbb{R}^{{I_1} \times {I_2} \times \cdots \times {I_N}}}$, a target CPD rank.} 
\Output{Singular values $\sigma_1,\sigma_2,\ldots,\sigma_R$ and vectors $\x_r^{(n)},\,\,r=1,2,\ldots,R,\,n=1,2,\ldots,N$}
\caption{TTr1SVD}\label{ALG:TTr1SVD}
      {
Compute the unfolding $\X_{(1)}$;\\
  $[\U_1^{(1)},{\bf S}_1^{(1)},\V_1^{(1)}] \leftarrow$ Compute the truncated SVD of $\X_{(1)}$ with the matrix rank $R$;\\
  $k=2$;\\
  \For{$n=2,3,\ldots,N-1$}{
  \For{r=1,2,\ldots,R}
  {
  $\V_{k}^{(n)} \leftarrow {\rm Reshape}({\bf V}_k^{(n-1)}(:,r),[I_n,I_{n+1}\ldots I_N]$);\\
  $[\U_{k+1}^{(n)},{\bf S}_{k+1}^{(n)},\V_{k+1}^{(n)}] \leftarrow$ Compute the truncated SVD of $\V_k^{(n)}$ with the matrix rank $R$;\\
  }
    $k\leftarrow k+1$;\\
     }
     Collect all $\U_k^{(n)},\,\V_k^{(n)}$ and ${\bf S}_k^{(n)}$ to build the vectors $\x_r^{(n)}$ and singular values $\sigma_r$ according to the discussion in Section \ref{Sec:KTD}.
        }
\end{algorithm}

\section{Efficient randomized algorithms for low-rank matrix approximation}\label{Sec:rand_matrix}
Randomized algorithms for low-rank matrix approximation are a set of approaches that use randomization to efficiently and approximately compute low-rank approximations of a given matrix. These algorithms are particularly advantageous for large-scale matrices, where traditional techniques like SVD become computationally expensive.

The randomized SVD (r-SVD) algorithm is a popular technique for computing a low-rank approximation of a matrix. Algorithm \ref{ALG:RSVD} summarizes the full randomized SVD procedure with oversampling and power iterations. It involves multiplying the original matrix A by a set of random matrices to obtain a sketched version of A. Then, SVD is performed on the sketched matrix to approximate its dominant singular vectors and values, which can be used to construct a low-rank approximation of the original matrix.

Let $\X\in\mathbb{R}^{I\times J}$ be a given matrix, and the goal is to find a low-rank approximation of rank $R$. The idea of the r-SVD is to multiply the matrix $\X$ by a standard Gaussian matrix $\Omega\in\mathbb{R}^{J\times (R+P)}$ to capture the range (column space) of $\X$. In this context, the parameter $P$, used to better capture the range of $\X$, is an oversampling parameter. So, we have $\Y=\X\,{\bf \Omega}\in\mathbb{R}^{I\times (R+P)}$, and the matrix $\Y$ is called the sketched matrix. An orthogonal projector onto the range of $\X$ can be computed using the economic QR decomposition of the matrix $\Y$, that is ${\bf P}=\Q\Q^T$, where $\Q$ is $[\Q,\sim]={\rm qr}(\X)$. It is clear that $\X\approx \Q\Q^T\X$, so one can compute the SVD of the matrix $\B=\Q^T\X\in\mathbb{R}^{(R+P)\times J}$, which is of smaller size and demands lower memory and computing effort. Assume that $\B=\U\Sigma\V^T$, the SVD of the original data matrix $\X$ can be readily recovered using the formula $\X=(\Q\U)\Sigma\V^T$. 

\RestyleAlgo{ruled}
\LinesNumbered
\begin{algorithm}
\SetKwInOut{Input}{Input}
\SetKwInOut{Output}{Output}\Input{The data matrix $\X \in \mathbb{R}^{I \times J}$, target rank $R$, oversampling parameter $P \geq 2$, power iterations $q \geq 0$} 
\Output{Factor matrices $\U$, $\Sigma$, $\V$ such that $\X \approx \U \Sigma \V^T$}
\caption{Randomized SVD (r-SVD)}\label{ALG:RSVD}
      {
Draw a Gaussian random matrix $\Omega \in \mathbb{R}^{J \times (R+P)}$;\\
\For{$i=1$ to $q$}{
    $\Y \leftarrow (\X \X^T)^q \X \Omega$;\\
}
$[\Q, \sim] \leftarrow \text{qr}(\Y)$; \tcp*{Economic QR decomposition}
$\B \leftarrow \Q^T \X$;\\
$[\hat{\U}, \Sigma, \V] \leftarrow \text{svd}(\B)$; \tcp*{Standard SVD on small matrix}
$\U \leftarrow \Q \hat{\U}$;\\
\Return $\U$, $\Sigma$, $\V$;
       	}       	
\end{algorithm}

The random projection method\footnote{By random projection, we mean multiplication with a random matrix.} may not perfectly capture the range of the matrix if its singular values do not decay very fast \cite{halko2011finding}. Here, one can empower it using the idea of power subspace iteration, which is indeed the sketched matrix $\Y$ is computed through the multiplication $\Y=(\X\X^T)^q\X{\bf \Omega}$ (as shown in Algorithm \ref{ALG:RSVD}).
The intuition behind this is that if we assume that $\X=\U\Sigma\V^T,$ then $\Y=\U\Sigma^{2q+1}\V^T$, which means that the left and right singular vectors of $\Y$ are the same as those of $\X$, but the singular values of the matrix $\Y$ decay much faster. This helps to get better results. The parameter $q$ is called the power iteration parameter, and simulations indicate that $q=1$ or $q=2$ is often sufficient to obtain promising results. We build our algorithm based on this version of the randomized algorithms, which is more practical. 

\begin{thm}\label{zhang}\cite{zhang2018randomized}
Let $\mathbf{X}\in\mathbb{R}^{I\times J}$ be a given matrix. Let $\mathbf{Q}$ be an orthogonal basis for the range of $\mathbf{X}$ (of rank $R$) computed via the randomized SVD with an oversampling parameter $P$ and $q$ power iterations. Then, the expected approximation error is bounded by
\begin{eqnarray}
\mathbb{E} \left( \|\mathbf{X} - \mathbf{Q}\mathbf{Q}^T\mathbf{X} \|_F^2 \right) \leq \left( 1 + \frac{R}{P-1} \tau^{4q} \right) \left( \sum_{j=R+1}^{\min\{I,J\}} \sigma_j^2 \right),
\end{eqnarray}
where $\sigma_j$ denotes the $j$-th singular value of $\mathbf{X}$ and $\tau = \sigma_{R+1} / \sigma_R$ is the spectral decay factor.
\end{thm}

Theorem \ref{zhang} guarantees that the randomized SVD yields a near-optimal low-rank approximation in expectation, with the error decaying exponentially with the number of power iterations $q$.

\begin{thm}[Spectral-norm error of randomized subspace iteration]
\label{thm:spectral_randomized_subspace}
Let ${\bf A} \in \mathbb{R}^{I\times J}$ have SVD
\[
{\bf A} = {\bf U} \Sigma {\bf V}^T,
\qquad
\sigma_1 \ge \sigma_2 \ge \cdots \ge \sigma_{\min(I,J)}.
\]
Fix a target rank $R \ge 1$, an oversampling parameter $P \ge 2$, and set
$k = R+P$. Let $\Omega \in \mathbb{R}^{J\times k}$ be a standard Gaussian
random matrix, and for an integer $q \ge 0$ define
\[
{\bf Y} = ({\bf A}{\bf A}^T)^q {\bf A} \Omega, \qquad {\bf Q} = \operatorname{orth}({\bf Y}), \qquad {\bf B} = {\bf Q}^T {\bf A}.
\]
Then the expected spectral-norm approximation error satisfies
\[
\mathbb{E}\,[\|{\bf A} - {\bf QB}\|_2]
\;\le\;
\left[
1+\left(\frac{\sigma_{R+1}}{\sigma_R}\right)^{2q+1}
\frac{Ce\sqrt{R+P}}{P}
\right]\sigma_{R+1},
\]
for some constant $C$.
\end{thm}

\begin{proof}
Since ${\bf B} = {\bf Q}^T {\bf A}$, we have $({\bf I} - {\bf QQ}^T){\bf A} = {\bf A} - {\bf QB}$, and therefore
\[
\|{\bf A} - {\bf QB}\|_2 = \|({\bf I} - {\bf QQ}^T){\bf A}\|_2.
\]

\paragraph{Step 1: SVD partitioning.}
Let the SVD of ${\bf A}$ be partitioned as
\[
{\bf A} = {\bf U}
\begin{bmatrix}
{\bf \Sigma}_1 & 0 \\
0 & {\bf \Sigma}_2
\end{bmatrix}
{\bf V}^T
= {\bf U}_1 {\bf \Sigma}_1 {\bf V}_1^T + {\bf U}_2 {\bf \Sigma}_2 {\bf V}_2^T,
\]
where
\begin{itemize}
    \item ${\bf U}_1 \in \mathbb{R}^{I \times R}$, ${\bf U}_2 \in \mathbb{R}^{I \times (I-R)}$,
    \item ${\bf V}_1 \in \mathbb{R}^{J \times R}$, ${\bf V}_2 \in \mathbb{R}^{J \times (J-R)}$,
    \item ${\bf \Sigma}_1 = \operatorname{diag}(\sigma_1, \ldots, \sigma_R)$, ${\bf \Sigma}_2 = \operatorname{diag}(\sigma_{R+1}, \ldots, \sigma_{\min(I,J)})$,
    \item and $\|{\bf \Sigma}_2\|_2 = \sigma_{R+1}$.
\end{itemize}

\paragraph{Step 2: The power-iterated sketch.}
Define the sketch matrix
\[
{\bf Y} = ({\bf A}{\bf A}^T)^q {\bf A} {\bf {\bf \Omega}}.
\]
Substituting the SVD of ${\bf A}$, we obtain
\[
{\bf Y} = {\bf U} {\bf \Sigma}^{2q+1} {\bf V}^T {\bf \Omega}
= {\bf U}
\begin{bmatrix}
{\bf \Sigma}_1^{2q+1} & 0 \\
0 & {\bf \Sigma}_2^{2q+1}
\end{bmatrix}
\begin{bmatrix}
{\bf V}_1^T {\bf \Omega} \\
{\bf V}_2^T {\bf \Omega}
\end{bmatrix}
=
{\bf U}
\begin{bmatrix}
{\bf \Sigma}_1^{2q+1} {\bf \Omega}_1 \\
{\bf \Sigma}_2^{2q+1} {\bf \Omega}_2
\end{bmatrix},
\]
where we define
\[
{\bf \Omega}_1 = {\bf V}_1^T {\bf \Omega} \in \mathbb{R}^{R \times k}, \qquad
{\bf \Omega}_2 = {\bf V}_2^T {\bf \Omega} \in \mathbb{R}^{(J-R) \times k}.
\]
Since ${\bf \Omega}$ is Gaussian and ${\bf V}$ is orthogonal, ${\bf \Omega}_1$ and ${\bf \Omega}_2$ are independent standard Gaussian matrices.

The orthogonal basis ${\bf Q}$ is obtained from the range of ${\bf Y}$ via QR decomposition:
\[
{\bf Y} = {\bf Q R}, \qquad {\bf Q} \in \mathbb{R}^{I \times k},\; {\bf Q}^T {\bf Q} = {\bf I}_k.
\]
Thus $\operatorname{range}({\bf Q}) = \operatorname{range}({\bf Y})$.

\paragraph{Step 3: Deriving the deterministic projection bound.}
We now derive a bound for $\|({\bf I} - {\bf QQ}^T){\bf A}\|_2$ that holds for any fixed $\Omega$ such that $\Omega_1$ has full row rank (which occurs almost surely).

Let $P_{\bf Q} = {\bf Q}{\bf Q}^T$ be the orthogonal projector onto $\operatorname{range}({\bf Q})$. Since $\operatorname{range}({\bf Q}) = \operatorname{range}({\bf Y})$ and ${\bf Y}$ has the block form
\[
{\bf Y} = {\bf U}_1 \Sigma_1^{2q+1} \Omega_1 + {\bf U}_2 \Sigma_2^{2q+1} \Omega_2,
\]
any vector in $\operatorname{range}({\bf Y})$ can be written as ${\bf U}_1 {\bf \alpha} + {\bf U}_2 {\bf \beta}$ with
\[
{\bf \alpha} = \Sigma_1^{2q+1} {\bf \Omega}_1 {\bf z}, \qquad {\bf \beta} = {\bf \Sigma}_2^{2q+1} {\bf \Omega}_2 {\bf z}
\]
for some ${\bf z} \in \mathbb{R}^k$. 

The key insight is that the projector ${\bf I} - {\bf Q}{\bf Q}^T$ annihilates $\operatorname{range}({\bf Y})$. In particular, for any ${\bf z}$,
\[
({\bf I} - {\bf Q}{\bf Q}^T) {\bf Y} {\bf z} = 0.
\]

We aim to bound $\|({\bf I} - {\bf Q}{\bf Q}^T){\bf A}\|_2$. Write ${\bf A} = {\bf U}_1 {\bf \Sigma}_1 {\bf V}_1^T + {\bf U}_2 {\bf \Sigma}_2 {\bf V}_2^T$. For any vector ${\bf x} = {\bf V}_1 \xi_1 + {\bf V}_2 {\bf \xi}_2$ (with ${\bf \xi}_1 \in \mathbb{R}^R$, ${\bf \xi}_2 \in \mathbb{R}^{J-R}$), we have
\begin{align*}
({\bf I} - {\bf Q}{\bf Q}^T){\bf A} {\bf x} &= ({\bf I} - {\bf Q}{\bf Q}^T)\bigl( {\bf U}_1 \Sigma_1 {\bf \xi}_1 + {\bf U}_2 {\bf \Sigma}_2 {\bf \xi}_2 \bigr).
\end{align*}

Since ${\bf U}_1 \Sigma_1^{2q+1} \Omega_1 {\bf z} \in \operatorname{range}({\bf Y})$ for any ${\bf z}$, we can choose ${\bf z}$ to cancel the ${\bf U}_1$ component. Specifically, for any $\xi_1$, there exists ${\bf z}$ such that
\[
{\bf \Sigma}_1^{2q+1} {\bf \Omega}_1 {\bf z} = {\bf \Sigma}_1 {\bf \xi}_1,
\]
provided ${\bf \Omega}_1$ has full row rank. Solving gives ${\bf z} = {\bf \Omega}_1^\dagger {\bf \Sigma}_1^{-2q} {\bf \xi}_1$, where ${\bf \Omega}_1^\dagger$ is the pseudoinverse of ${\bf \Omega}_1$. Then
\[
{\bf Y} {\bf z} = {\bf U}_1 {\bf \Sigma}_1^{2q+1} {\bf \Omega}_1 {\bf z} + {\bf U}_2 {\bf \Sigma}_2^{2q+1} {\bf \Omega}_2 {\bf z}
= {\bf U}_1 {\bf \Sigma}_1 \xi_1 + {\bf U}_2 {\bf \Sigma}_2^{2q+1} {\bf \Omega}_2 {\bf \Omega}_1^\dagger {\bf \Sigma}_1^{-2q} {\bf \xi}_1.
\]

Now subtract this from ${\bf A} {\bf x}$:
\begin{align*}
{\bf A}{\bf x} - {\bf Y} {\bf z} &= {\bf U}_1 {\bf \Sigma}_1 \xi_1 + {\bf U}_2 {\bf \Sigma}_2 \xi_2 - \bigl( {\bf U}_1 {\bf \Sigma}_1 \xi_1 + {\bf U}_2 {\bf \Sigma}_2^{2q+1} {\bf \Omega}_2 {\bf \Omega}_1^\dagger {\bf \Sigma}_1^{-2q} {\bf \xi}_1 \bigr) \\
&= {\bf U}_2 \bigl( {\bf \Sigma}_2 {\bf \xi}_2 - {\bf \Sigma}_2^{2q+1} {\bf \Omega}_2 {\bf \Omega}_1^\dagger {\bf \Sigma}_1^{-2q} {\bf \xi}_1 \bigr).
\end{align*}

Since ${\bf Y} {\bf z} \in \operatorname{range}({\bf Y}) = \operatorname{range}({\bf Q})$, we have $({\bf I} - {\bf Q}{\bf Q}^T) {\bf Y} {\bf z} = 0$. Therefore,
\[
({\bf I} - {\bf Q}{\bf Q}^T) {\bf A} {\bf x} = ({\bf I} - {\bf Q}{\bf Q}^T) ({\bf A} {\bf x} - {\bf Y} {\bf z}) = ({\bf I} - {\bf Q}{\bf Q}^T) {\bf U}_2 \bigl( {\bf \Sigma}_2 \xi_2 - {\bf \Sigma}_2^{2q+1} {\bf \Omega}_2 {\bf \Omega}_1^\dagger {\bf \Sigma}_1^{-2q} {\bf \xi}_1 \bigr).
\]

Taking norms and using $\|{\bf I} -{\bf  Q}{\bf Q}^T\|_2 = 1$ (since it is an orthogonal projector), we obtain
\[
\|({\bf I} - {\bf Q}{\bf Q}^T) {\bf A} {\bf x}\|_2 \le \| {\bf U}_2 \bigl( {\bf \Sigma}_2 {\bf \xi}_2 - {\bf \Sigma}_2^{2q+1} {\bf \Omega}_2 {\bf \Omega}_1^\dagger {\bf \Sigma}_1^{-2q} {\bf \xi}_1 \bigr) \|_2.
\]

Since ${\bf U}_2$ has orthonormal columns, $\|{\bf U}_2 {\bf w}\|_2 = \|{\bf w}\|_2$. Thus,
\[
\|({\bf I} - {\bf Q}{\bf Q}^T) {\bf A} {\bf x}\| \le \|_2 {\bf \Sigma}_2 {\bf \xi}_2 - {\bf \Sigma}_2^{2q+1} {\bf \Omega}_2 {\bf \Omega}_1^\dagger {\bf \Sigma}_1^{-2q} \xi_1 \|_2.
\]

Now apply the triangle inequality:
\[
\| {\bf \Sigma}_2 {\bf \xi}_2 - {\bf \Sigma}_2^{2q+1} {\bf \Omega}_2 {\bf \Omega}_1^\dagger {\bf \Sigma}_1^{-2q} {\bf \xi}_1 \|_2
\le \|{\bf \Sigma}_2 {\bf \xi}_2\|_2 + \|{\bf \Sigma}_2^{2q+1} {\bf \Omega}_2 {\bf \Omega}_1^\dagger {\bf \Sigma}_1^{-2q} {\bf \xi}_1\|_2.
\]

The first term is bounded by $\|{\bf \Sigma}_2\|_2 \|{\bf \xi}_2\|_2 = \sigma_{R+1} \|{\bf \xi}_2\|_2$.
The second term is bounded by
\begin{eqnarray*}
\|{\bf \Sigma}_2^{2q+1}\|_2 \|{\bf \Omega}_2 {\bf \Omega}_1^\dagger\|_2 \|{\bf \Sigma}_1^{-2q}\|_2 \|\xi_1\|_2
&=& {\bf \sigma}_{R+1}^{2q+1} \|{\bf \Omega}_2 {\bf \Omega}_1^\dagger\|_2 \sigma_R^{-2q} \|{\bf \xi}_1\|_2
\\&=& \left(\frac{\sigma_{R+1}}{\sigma_R}\right)^{2q+1} \sigma_{R+1} \|{\bf \Omega}_2 {\bf \Omega}_1^\dagger\|_2 \|{\bf \xi}_1\|_2.
\end{eqnarray*}

Therefore,
\[
\|({\bf I} - {\bf Q}{\bf Q}^T) {\bf A} {\bf x}\|_2
\le \sigma_{R+1} \|{\bf \xi}_2\|_2
+ \left(\frac{\sigma_{R+1}}{\sigma_R}\right)^{2q+1} \sigma_{R+1} \|{\bf \Omega}_2 {\bf \Omega}_1^\dagger\|_2 \|{\bf \xi}_1\|_2.
\]

Recall that ${\bf x} = {\bf V}_1 {\bf \xi}_1 + {\bf V}_2 {\bf \xi}_2$, and since ${\bf V}$ is orthogonal, $\|{\bf x}\|_2^2 = \|{\bf \xi}_1\|_2^2 + \|{\bf \xi}_2\|_2^2$. For any unit vector ${\bf x}$ (so $\|{\bf \xi}_1\|_2^2 + \|{\bf \xi}_2\|^2 = 1$), we have $\|{\bf \xi}_1\|_2 \le 1$ and $\|{\bf \xi}_2\| \le 1$. Hence,
\[
\|({\bf I} - {\bf Q}{\bf Q}^T) {\bf A}\|_2
= \sup_{\|{\bf x}\|_2=1} \|({\bf I} - {\bf Q}{\bf Q}^T) {\bf A} {\bf x}\|_2
\le \sigma_{R+1} + \left(\frac{\sigma_{R+1}}{\sigma_R}\right)^{2q+1} \sigma_{R+1} \|{\bf \Omega}_2 {\bf \Omega}_1^\dagger\|_2.
\]

Thus we have established the deterministic bound:
\[
\|{\bf A} - {\bf Q}{\bf B}\|_2 = \|({\bf I} - {\bf Q}{\bf Q}^T){\bf A}\|_2
\le \sigma_{R+1} \left( 1 + \left(\frac{\sigma_{R+1}}{\sigma_R}\right)^{2q+1} \|{\bf \Omega}_2 {\bf \Omega}_1^\dagger\|_2 \right).
\]

\paragraph{Step 4: Expectation of $\|{\bf \Omega}_2 {\bf \Omega}_1^\dagger\|_2$.}
Since ${\bf \Omega}_1$ and ${\bf \Omega}_2$ are independent, we have
\[
\mathbb{E}\bigl[\|{\bf \Omega}_2 {\bf \Omega}_1^\dagger\|_2\bigr]
\le \mathbb{E}\bigl[\|{\bf \Omega}_2\| \cdot \|{\bf \Omega}_1^\dagger\|_2\bigr]
= \mathbb{E}\|{\bf \Omega}_2\|_2 \cdot \mathbb{E}\|{\bf \Omega}_1^\dagger\|_2,
\]
where the equality follows from independence. A standard bound (see Lemma 10.4 in \cite{halko2011finding}) gives
\[
\mathbb{E}\|{\bf \Omega}_1^\dagger\|_2 \le \frac{e\sqrt{R+P}}{P},
\]
provided $P \ge 2$, and if we assume that $\mathbb{E}\|{\bf \Omega}_2\|_2=C$, we have
\[
\mathbb{E}\bigl[\|{\bf \Omega}_2 {\bf \Omega}_1^\dagger\|_2\bigr]
\le C\frac{e\sqrt{R+P}}{P}.
\]

Combining the above estimates gives
\[
\mathbb{E}\,[\|{\bf A} - {\bf Q}{\bf B}\|_2]
\le
\left[
1+\left(\frac{\sigma_{R+1}}{\sigma_R}\right)^{2q+1}
\frac{eC\sqrt{R+P}}{P}
\right]\sigma_{R+1},
\]
which completes the proof.
\end{proof}

\section{Proposed fast randomized Kronecker tensor decomposition}\label{RKTD}
This section presents efficient randomized algorithms to decompose a tensor into the KTD format. It is known that the randomized algorithms for tensor decomposition provide a versatile and efficient approach to analyzing and processing large-scale tensor data, making them a valuable tool in data science, machine learning, and other fields. This motivated us to develop efficient algorithms for the computation of the KTD of tensors. The key advantages of our randomized KTD approach include: (1) explicit preservation of multi-way block structures for enhanced interpretability, (2) exponential parameter reduction for very high-dimensional tensors, and (3) computational efficiency through Kronecker algebra that would be lost if using CPD on reshaped data. The simulation section shows we can achieve several orders of magnitude acceleration using our proposed randomized algorithms for large-scale tensors.

\RestyleAlgo{ruled}
\LinesNumbered
\begin{algorithm}
\SetKwInOut{Input}{Input}
\SetKwInOut{Output}{Output}\Input{The data tensor $\underline{\mathbf X} \in {\mathbb{R}^{{I_1} \times {I_2} \times \cdots \times {I_N}}}$,  the dimensions $J^{(m)}_n,\,m=1,2,\ldots, M,\,n=1,2,\ldots, N$, a KTD rank, a power iteration and an oversampling parameters} 
\Output{Singular values $\sigma_1,\sigma_2,\ldots,\sigma_R$ and tensors $\cX_r^{(n)},\,\,r=1,2,\ldots,R,\,n=1,2,\ldots,N$}
\caption{Proposed randomized KTD of tensor $\underline{\bf X}$}\label{ALG:rKTD}
      {
  $\cY\leftarrow$ Reshape ($\cX$,$[J_1^{(1)},\ldots,J_N^{(1)},J^{(2)}_1\ldots,J_N^{(2)},\ldots,J_1^{(M)},\ldots,J_N^{(M)}]$);\\
  $\cY \leftarrow$ Permute $(\cY,{\bf p})$,\,\, ${\bf p}$ was introduced in \eqref{permu};\\
   Reshape $\cY$ to size $I_1\times I_2\times \cdots\times I_N$;\\
$[\sigma_1,\ldots,\sigma_R,\x_1^{(1)},\ldots,\x_R^{(N)}]\leftarrow$ Apply the randomized CPD algorithm (randomized TTr1SVD) to $\cY$ with the CPD rank $R$ and orthogonal rank-1 terms;\\
  \For{all nonzero $\sigma_r$}
{$\cX_r^{(n)}\leftarrow$ Reshape($\x_r^{(n)},\,[J^{(r)}_1,J^{(r)}_2,\ldots,J^{(r)}_N]$)
}
       	}       	
\end{algorithm} 

From the discussion outlined in Section \ref{Sec:KTD}, the computation of the CPD is essential to compute a KTD. Moreover, it was often used due to the favorable properties of the TTr1SVD algorithm, which generates the CPD with orthogonal rank-1 terms. This algorithm involves computing a series of SVDs, which is quite prohibitive for large-scale data tensors and impractical for real-world applications. We proposed using the framework of randomization to reduce computational and memory complexities. It is worth noting that different types of randomized CPD algorithms, such as those proposed in \cite{battaglino2018practical,vervliet2015randomized,ahmadi2021randomized}, can also be exploited. However, studying all of them is beyond the scope of this paper and will be studied in future works. In Algorithm \ref{ALG:TTr1SVD}, all truncated SVDs are replaced with the fast randomized algorithms with the oversampling and power iteration parameters. This new proposed randomized algorithm is summarized in Algorithm \ref{ALG:rKTD}. 

\begin{rem}
A classical randomized algorithm applies power iterations $q\geq 1$ to enhance the decay of singular values; it does not use the results from each step of the iteration. Since it only uses the final result, $(\mathbf{X}\mathbf{X}^\top)^q\mathbf{X}\mathbf{B}$, the method generally cannot closely approximate the largest $k$ singular values. The randomized block Krylov \cite{musco2015randomized,persson2026randomized} approach can capture a much richer spectral range, leading to faster, super-linear convergence and higher accuracy for the same number of tensor operations.	
\end{rem}

A key strength of randomized algorithms is their ability to provide rigorous performance guarantees. We now establish an error bound for the approximation computed by the proposed randomized KTD algorithm (Algorithm \ref{ALG:rKTD}), demonstrating its reliability. Our analysis builds upon a foundational result (Theorem \ref{zhang}) for the randomized SVD.

\subsection{Theoretical Analysis of Randomized KTD}
Building upon the theoretical guarantees of the randomized SVD, in this section, we establish error bounds for the proposed randomized KTD algorithm (Algorithm \ref{ALG:rKTD}). Our analysis demonstrates that the approximation error propagates through the sequence of randomized approximations in the TTr1SVD process.

The core of the TTr1SVD algorithm is a sequence of SVDs on progressively smaller matrices. The randomized version replaces each deterministic SVD with its randomized counterpart. To analyze the error, we must first formalize this sequential process and its intermediate outputs.

Let $\underline{\bf X} \in \mathbb{R}^{I_1 \times I_2 \times \cdots \times I_N}$ be the original tensor. The deterministic TTr1SVD produces an exact decomposition:
\begin{eqnarray}
\underline{\bf X} = \sum_{i_1=1}^{R_1} \sum_{i_2=1}^{R_2} \cdots \sum_{i_{N-1}=1}^{R_{N-1}} \sigma_{i_1 i_2 \ldots i_{N-1}} \, \mathbf{u}_{i_1}^{(1)} \circ \mathbf{u}_{i_1 i_2}^{(2)} \circ \cdots \circ \mathbf{u}_{i_1 \ldots i_{N-1}}^{(N-1)} \circ \mathbf{v}_{i_1 \ldots i_{N-1}}^{(N)},
\end{eqnarray}
where the singular values are products $\sigma_{i_1 i_2 \ldots i_{N-1}} = \sigma_{i_1}^{(1)} \sigma_{i_1 i_2}^{(2)} \cdots \sigma_{i_1 \ldots i_{N-1}}^{(N-1)}$, and the vectors are orthonormal within their respective groups. This is computed via the following sequence of exact SVDs:
\begin{align}
\mathbf{X}_{(1)} &= \sum_{i_1=1}^{R_1} \sigma_{i_1}^{(1)} \, \mathbf{u}_{i_1}^{(1)} \circ \mathbf{v}_{i_1}^{(1)}, \quad \mathbf{v}_{i_1}^{(1)} \in \mathbb{R}^{I_2 I_3 \cdots I_N}, \\
\mathbf{V}_{i_1}^{(2)} &= \text{reshape}(\mathbf{v}_{i_1}^{(1)}, [I_2, I_3 \cdots I_N]) = \sum_{i_2=1}^{R_2} \sigma_{i_1 i_2}^{(2)} \, \mathbf{u}_{i_1 i_2}^{(2)} \circ \mathbf{v}_{i_1 i_2}^{(2)}, \\
&\vdots \nonumber \\
\mathbf{V}_{i_1 \ldots i_{N-2}}^{(N-1)} &= \text{reshape}(\mathbf{v}_{i_1 \ldots i_{N-2}}^{(N-2)}, [I_{N-1}, I_N]) = \sum_{i_{N-1}=1}^{R_{N-1}} \sigma_{i_1 \ldots i_{N-1}}^{(N-1)} \, \mathbf{u}_{i_1 \ldots i_{N-1}}^{(N-1)} \circ \mathbf{v}_{i_1 \ldots i_{N-1}}^{(N)}.
\end{align}

In the formulations above, we call the vector $(R_1,R_2,\ldots,R_{N-1})$, where each component indicates the truncation rank for the corresponding unfolding matrix, a multirank. In the randomized TTr1SVD, each SVD in this sequence is replaced by a randomized SVD (r-SVD). This introduces an approximation error at each step. The following key lemma bounds the error introduced when approximating a single branch of this decomposition tree.

\begin{lem}\label{lem:single_branch_error_corrected}
Consider a single branch of the TTr1SVD tree.
Let $\mathbf{M}^{(1)} \in \mathbb{R}^{m_1 \times n_1}$ be the starting matrix.
At each step $\ell = 1, 2, \ldots, L$, we compute a rank-$R_\ell$ randomized SVD with oversampling $P_\ell \ge 2$ and $q_\ell \ge 0$ power iterations, yielding an approximation $\tilde{\mathbf{M}}^{(\ell)}$ of $\mathbf{M}^{(\ell)}$.
For $\ell \ge 2$, the matrix $\mathbf{M}^{(\ell)}$ is obtained by reshaping the right singular vectors from step $\ell-1$.

Let $\mathbf{v}_{\text{true}}$ be a right singular vector of the final stage (e.g., of $\mathbf{M}^{(L)}$), and $\tilde{\mathbf{v}}$ its approximation from the randomized pipeline.
Then
\[
\mathbb{E}\bigl[\|\mathbf{v}_{\text{true}} - \tilde{\mathbf{v}}\|_2\bigr]
\;\le\;
\sum_{\ell=1}^{L}
\left( \prod_{k=1}^{\ell-1} \frac{\sqrt{2}\,\sigma_1(\mathbf{M}^{(k)})}{\sigma_{R_k}(\mathbf{M}^{(k)}) - \sigma_{R_k+1}(\mathbf{M}^{(k)})} \right)
\cdot
\frac{\sqrt{2}\,\mathbb{E}\bigl[\|\mathbf{M}^{(\ell)} - \tilde{\mathbf{M}}^{(\ell)}\|_F\bigr]}{\sigma_{R_\ell}(\mathbf{M}^{(\ell)}) - \sigma_{R_\ell+1}(\mathbf{M}^{(\ell)})},
\]
where:
\begin{itemize}
    \item $\sigma_j(\mathbf{M}^{(k)})$ are the singular values of $\mathbf{M}^{(k)}$,
    \item the denominator $\sigma_{R_k} - \sigma_{R_k+1}$ is the \textbf{spectral gap} at the truncation rank,
    \item the product over an empty set (for $\ell = 1$) is defined as $1$,
    \item the factor $\sqrt{2}$ comes from the sine-to-vector conversion in Wedin's theorem,
    \item the expectation $\mathbb{E}\bigl[\|\mathbf{M}^{(\ell)} - \tilde{\mathbf{M}}^{(\ell)}\|_F\bigr]$ is bounded by Theorem \ref{zhang} (or Theorem \ref{thm:spectral_randomized_subspace}):
    \[
    \mathbb{E}\bigl[\|\mathbf{M}^{(\ell)} - \tilde{\mathbf{M}}^{(\ell)}\|_F^2\bigr]
    \le \left(1 + \frac{R_\ell}{P_\ell-1}\tau_\ell^{4q_\ell}\right) \sum_{j > R_\ell} \sigma_j^2(\mathbf{M}^{(\ell)}),
    \]
    with $\tau_\ell = \sigma_{R_\ell+1}(\mathbf{M}^{(\ell)}) / \sigma_{R_\ell}(\mathbf{M}^{(\ell)})$.
\end{itemize}
\end{lem}

\begin{proof}
We proceed by induction on the number of SVD steps $L$.

\paragraph{Base Case ($L = 1$).}
At the first step, the randomized SVD approximates $\mathbf{M}^{(1)}$ by $\tilde{\mathbf{M}}^{(1)} = \tilde{\mathbf{U}}^{(1)} \tilde{\mathbf{\Sigma}}^{(1)} (\tilde{\mathbf{V}}^{(1)})^T$.
Let $\mathbf{v}$ be the $r$-th right singular vector of $\mathbf{M}^{(1)}$ (with $r = R_1$), and $\tilde{\mathbf{v}}$ its approximation.

Wedin's theorem \cite{wedin1972perturbation} states: for matrices $\mathbf{M}$ and $\tilde{\mathbf{M}} = \mathbf{M} + \mathbf{E}$, if $\delta = \sigma_r(\mathbf{M}) - \sigma_{r+1}(\mathbf{M}) > 0$ and $\|\mathbf{E}\|_2 < \delta$, then
\[
\|\sin \Theta(\mathbf{v}, \tilde{\mathbf{v}})\|_2 \le \frac{\|\mathbf{E}\|_2}{\delta},
\]
where $\Theta$ is the principal angle between the one-dimensional subspaces spanned by $\mathbf{v}$ and $\tilde{\mathbf{v}}$.
For any two unit vectors, after choosing the sign of $\tilde{\mathbf{v}}$ so that $\mathbf{v}^T \tilde{\mathbf{v}} \ge 0$, we have the relation
\[
\|\mathbf{v} - \tilde{\mathbf{v}}\|_2 \le \sqrt{2}\, \|\sin \Theta(\mathbf{v}, \tilde{\mathbf{v}})\|_2.
\]
Therefore,
\[
\|\mathbf{v} - \tilde{\mathbf{v}}\|_2 \le \frac{\sqrt{2} \,\|\mathbf{M}^{(1)} - \tilde{\mathbf{M}}^{(1)}\|_2}{\sigma_{R_1}(\mathbf{M}^{(1)}) - \sigma_{R_1+1}(\mathbf{M}^{(1)})}.
\]
Since $\|\cdot\|_2 \le \|\cdot\|_F$, we obtain
\[
\|\mathbf{v} - \tilde{\mathbf{v}}\|_2 \le \frac{\sqrt{2} \,\|\mathbf{M}^{(1)} - \tilde{\mathbf{M}}^{(1)}\|_F}{\sigma_{R_1}(\mathbf{M}^{(1)}) - \sigma_{R_1+1}(\mathbf{M}^{(1)})}.
\]
Taking expectations and using linearity gives the base case:
\[
\mathbb{E}\bigl[\|\mathbf{v} - \tilde{\mathbf{v}}\|_2\bigr]
\le \frac{\sqrt{2} \,\mathbb{E}\bigl[\|\mathbf{M}^{(1)} - \tilde{\mathbf{M}}^{(1)}\|_F\bigr]}{\sigma_{R_1}(\mathbf{M}^{(1)}) - \sigma_{R_1+1}(\mathbf{M}^{(1)})}.
\]

\paragraph{Inductive Hypothesis.}
Assume that after $L-1$ steps, the error in the right singular vector $\mathbf{v}^{(L-1)}$ satisfies
\[
\mathbb{E}\bigl[\|\mathbf{v}^{(L-1)} - \tilde{\mathbf{v}}^{(L-1)}\|_2\bigr]
\le \sum_{\ell=1}^{L-1}
\left( \prod_{k=1}^{\ell-1} \frac{\sqrt{2}\,\sigma_1(\mathbf{M}^{(k)})}{\sigma_{R_k}(\mathbf{M}^{(k)}) - \sigma_{R_k+1}(\mathbf{M}^{(k)})} \right)
\cdot
\frac{\sqrt{2}\,\mathbb{E}\bigl[\|\mathbf{M}^{(\ell)} - \tilde{\mathbf{M}}^{(\ell)}\|_F\bigr]}{\sigma_{R_\ell}(\mathbf{M}^{(\ell)}) - \sigma_{R_\ell+1}(\mathbf{M}^{(\ell)})}.
\]

\paragraph{Inductive Step ($L-1 \to L$).}
The matrix $\mathbf{M}^{(L)}$ is constructed from the right singular vectors $\mathbf{v}^{(L-1)}$ of step $L-1$ via reshaping.
Specifically, for each branch, we have
\[
\mathbf{M}^{(L)} = \operatorname{reshape}(\mathbf{v}^{(L-1)}, [d_1, d_2]),
\]
where $d_1 \cdot d_2 = \dim(\mathbf{v}^{(L-1)})$.
The reshaping operation is linear and norm-preserving:
\[
\|\mathbf{M}^{(L)}\|_F = \|\mathbf{v}^{(L-1)}\|_2, \qquad
\|\tilde{\mathbf{M}}^{(L)}\|_F = \|\tilde{\mathbf{v}}^{(L-1)}\|_2,
\]
and the same holds for the difference:
\[
\|\mathbf{M}^{(L)} - \tilde{\mathbf{M}}^{(L)}\|_F = \|\mathbf{v}^{(L-1)} - \tilde{\mathbf{v}}^{(L-1)}\|_2.
\]

Now apply the base-case bound to step $L$, using the true input $\mathbf{M}^{(L)}$ and its approximation $\tilde{\mathbf{M}}^{(L)}$:
\[
\mathbb{E}\bigl[\|\mathbf{v}^{(L)} - \tilde{\mathbf{v}}^{(L)}\|_2\bigr]
\le
\frac{\sqrt{2}\,\mathbb{E}\bigl[\|\mathbf{M}^{(L)} - \tilde{\mathbf{M}}^{(L)}\|_F\bigr]}{\sigma_{R_L}(\mathbf{M}^{(L)}) - \sigma_{R_L+1}(\mathbf{M}^{(L)})}.
\]

But the error $\|\mathbf{M}^{(L)} - \tilde{\mathbf{M}}^{(L)}\|_F$ has two sources:

\begin{enumerate}
    \item \textbf{Inherited error} from the previous step: even if step $L$ had no new randomization, the input $\tilde{\mathbf{M}}^{(L)}$ would differ from $\mathbf{M}^{(L)}$ because $\tilde{\mathbf{v}}^{(L-1)} \neq \mathbf{v}^{(L-1)}$.
    By the norm-preserving property,
    \[
    \|\mathbf{M}^{(L)} - \tilde{\mathbf{M}}^{(L)}\|_F^{\text{(inherited)}} = \|\mathbf{v}^{(L-1)} - \tilde{\mathbf{v}}^{(L-1)}\|_2.
    \]

    \item \textbf{New randomization error} from the r-SVD at step $L$, which we denote by $\|\mathbf{E}^{(L)}\|_F$.
\end{enumerate}

By the triangle inequality,
\[
\|\mathbf{M}^{(L)} - \tilde{\mathbf{M}}^{(L)}\|_F
\le \|\mathbf{v}^{(L-1)} - \tilde{\mathbf{v}}^{(L-1)}\|_2 + \|\mathbf{E}^{(L)}\|_F.
\]

Taking expectations:
\[
\mathbb{E}\bigl[\|\mathbf{M}^{(L)} - \tilde{\mathbf{M}}^{(L)}\|_F\bigr]
\le \mathbb{E}\bigl[\|\mathbf{v}^{(L-1)} - \tilde{\mathbf{v}}^{(L-1)}\|_2\bigr] + \mathbb{E}\bigl[\|\mathbf{E}^{(L)}\|_F\bigr].
\]

Now substitute the inductive hypothesis for the first term, and note that $\mathbb{E}[\|\mathbf{E}^{(L)}\|_F] = \mathbb{E}[\|\mathbf{M}^{(L)} - \tilde{\mathbf{M}}^{(L)}\|_F^{\text{(r-SVD)}}]$ is exactly the r-SVD error bound from Theorem \ref{zhang} (or its square root).

Putting everything together:
\[
\mathbb{E}\bigl[\|\mathbf{v}^{(L)} - \tilde{\mathbf{v}}^{(L)}\|_2\bigr]
\le
\frac{\sqrt{2}}{\Delta_L}
\left(
\sum_{\ell=1}^{L-1}
\left( \prod_{k=1}^{\ell-1} \frac{\sqrt{2}\,\sigma_1(\mathbf{M}^{(k)})}{\Delta_k} \right)
\frac{\sqrt{2}\,\mathbb{E}[\|\mathbf{M}^{(\ell)} - \tilde{\mathbf{M}}^{(\ell)}\|_F]}{\Delta_\ell}
\;+\;
\mathbb{E}\bigl[\|\mathbf{E}^{(L)}\|_F\bigr]
\right),
\]
where $\Delta_k = \sigma_{R_k}(\mathbf{M}^{(k)}) - \sigma_{R_k+1}(\mathbf{M}^{(k)})$.

This simplifies to
\[
\mathbb{E}\bigl[\|\mathbf{v}^{(L)} - \tilde{\mathbf{v}}^{(L)}\|_2\bigr]
\le
\sum_{\ell=1}^{L}
\left( \prod_{k=1}^{\ell-1} \frac{\sqrt{2}\,\sigma_1(\mathbf{M}^{(k)})}{\Delta_k} \right)
\frac{\sqrt{2}\,\mathbb{E}\bigl[\|\mathbf{M}^{(\ell)} - \tilde{\mathbf{M}}^{(\ell)}\|_F\bigr]}{\Delta_\ell},
\]
completing the induction.

\paragraph{Connection to Condition Number.}
Observe that
\[
\frac{\sigma_1(\mathbf{M}^{(k)})}{\sigma_{R_k}(\mathbf{M}^{(k)}) - \sigma_{R_k+1}(\mathbf{M}^{(k)})}
\le
\frac{\sigma_1(\mathbf{M}^{(k)})}{\sigma_{R_k}(\mathbf{M}^{(k)})} \cdot
\frac{1}{1 - \frac{\sigma_{R_k+1}}{\sigma_{R_k}}}
= \kappa(\mathbf{M}^{(k)}) \cdot \frac{1}{1 - \tau_k},
\]
where $\kappa(\mathbf{M}^{(k)}) = \sigma_1 / \sigma_{R_k}$ is the ordinary condition number and $\tau_k = \sigma_{R_k+1} / \sigma_{R_k}$.
The factor $\sqrt{2}$ is then absorbed into the constant (often denoted $C$).

Thus the bound can be written compactly as
\[
\mathbb{E}\bigl[\|\mathbf{v}_{\text{true}} - \tilde{\mathbf{v}}\|_2\bigr]
\le
\sum_{\ell=1}^{L}
\left( \prod_{k=1}^{\ell-1} C \cdot \frac{\kappa(\mathbf{M}^{(k)})}{1 - \tau_k} \right)
\cdot
C \cdot \frac{\mathbb{E}\bigl[\|\mathbf{M}^{(\ell)} - \tilde{\mathbf{M}}^{(\ell)}\|_F\bigr]}{\sigma_{R_\ell}(\mathbf{M}^{(\ell)}) (1 - \tau_\ell)},
\]
with $C = \sqrt{2}$ (or a slightly larger constant depending on the specific Wedin bound used).
This completes the proof.
\end{proof}

\begin{thm}\label{thm:rktd_error_bound_corrected}
Let $\underline{\bf X} \in \mathbb{R}^{I_1 \times I_2 \times \cdots \times I_N}$ be an $N$-th order tensor.
Let $\underline{\bf X}_{\text{rKTD}}$ be its rank-$(R_1,R_2,\ldots,R_{N-1})$ approximation computed by Algorithm \ref{ALG:rKTD} using randomized TTr1SVD with oversampling parameter $P \ge 2$ and $q \ge 0$ power iterations at each internal SVD step.

For each level $n = 1, 2, \ldots, N-1$, define the matrix $\mathbf{M}^{(n)}$ (the $n$-th matricization in the TTr1SVD sequence) with singular values $\sigma_1^{(n)} \ge \sigma_2^{(n)} \ge \cdots$.
Let $\delta_n = \sigma_{R_n}^{(n)} - \sigma_{R_n+1}^{(n)}$ be the spectral gap at the truncation rank $R_n$, and let $\tau_n = \sigma_{R_n+1}^{(n)} / \sigma_{R_n}^{(n)}$.

Then the expected squared Frobenius approximation error satisfies
\[
\mathbb{E}\left[ \|\underline{\bf X} - \underline{\bf X}_{\text{rKTD}}\|_F^2 \right]
\;\le\;
\sum_{n=1}^{N-1}
\left( \prod_{j=1}^{n-1} \frac{2\sigma_1^{(j)}}{\delta_j} \right)^2
\cdot
\left( 1 + \frac{R_n}{P-1} \tau_n^{4q} \right)
\sum_{k > R_n} \bigl(\sigma_k^{(n)}\bigr)^2,
\]
where the product over an empty set (for $n=1$) is defined as $1$.

If we define the condition numbers $\kappa_n = \sigma_1^{(n)} / \delta_n,$, the bound simplifies to
\[
\mathbb{E}\left[ \|\underline{\bf X} - \underline{\bf X}_{\text{rKTD}}\|_F^2 \right]
\;\le\;
\sum_{n=1}^{N-1}
\left( \prod_{j=1}^{n-1} \frac{2\kappa_j}{1 - \tau_j} \right)^2
\cdot
\left( 1 + \frac{R}{P-1} \tau_n^{4q} \right)
\sum_{k > R_n} \bigl(\sigma_k^{(n)}\bigr)^2.
\]

\end{thm}

\begin{proof}
The proof proceeds by analyzing the recursive tree structure of TTr1SVD and applying the corrected Lemma~\ref{lem:single_branch_error_corrected} together with the randomized SVD bound (Theorem~\ref{zhang}).

\paragraph{Step 1: Structure of the TTr1SVD tree.}
The TTr1SVD algorithm (Algorithm~\ref{ALG:TTr1SVD}) constructs a tree where:
\begin{itemize}
    \item The root corresponds to the mode-1 unfolding $\mathbf{M}^{(1)} = \mathbf{X}_{(1)}$.
    \item A node at level $n$ corresponds to a matrix $\mathbf{V}_{i_1\ldots i_{n-1}}^{(n)}$ obtained by reshaping a right singular vector from level $n-1$.
    \item Each node is approximated by a rank-$R_n$ randomized SVD, producing $R_n$ children.
    \item The leaves at level $N-1$ produce the final singular vectors (the $\mathbf{x}_r^{(n)}$ in the CPD representation).
\end{itemize}

The total approximation error $\|\underline{\bf X} - \underline{\bf X}_{\text{rKTD}}\|_F^2$ equals the sum over all leaves of the squared errors in the rank-1 terms, plus cross-terms that vanish due to orthogonality of the decomposition \cite{batselier2017constructive}. Therefore, it suffices to bound the error in a single branch and then sum over all branches.

\paragraph{Step 2: Single-branch error from Lemma~\ref{lem:single_branch_error_corrected}.}
For a fixed branch that follows a sequence of right singular vectors $\mathbf{v}^{(1)} \to \mathbf{v}^{(2)} \to \cdots \to \mathbf{v}^{(N-1)}$, Lemma~\ref{lem:single_branch_error_corrected} gives
\[
\mathbb{E}\bigl[\|\mathbf{v}^{(N-1)} - \tilde{\mathbf{v}}^{(N-1)}\|_2\bigr]
\le
\sum_{\ell=1}^{N-1}
\left( \prod_{k=1}^{\ell-1} \frac{\sqrt{2}\, \sigma_1(\mathbf{M}^{(k)})}{\delta_k} \right)
\cdot
\frac{\sqrt{2}\,\mathbb{E}\bigl[\|\mathbf{M}^{(\ell)} - \tilde{\mathbf{M}}^{(\ell)}\|_F\bigr]}{\delta_\ell},
\]
where $\delta_\ell = \sigma_{R_\ell}(\mathbf{M}^{(\ell)}) - \sigma_{R_\ell+1}(\mathbf{M}^{(\ell)})$.

Squaring both sides and using the Cauchy-Schwarz inequality, we obtain
\[
\mathbb{E}\bigl[\|\mathbf{v}^{(N-1)} - \tilde{\mathbf{v}}^{(N-1)}\|_2^2\bigr]
\le
\sum_{\ell=1}^{N-1}
\left( \prod_{k=1}^{\ell-1} \frac{4\sigma_1(\mathbf{M}^{(k)})^2}{\delta_k^2} \right)
\cdot
\frac{2\,\mathbb{E}\bigl[\|\mathbf{M}^{(\ell)} - \tilde{\mathbf{M}}^{(\ell)}\|_F\bigr]^2}{\delta_\ell^2}.
\]

\paragraph{Step 3: Insert the randomized SVD error bound.}
By Theorem~\ref{zhang}, the expected squared Frobenius error of the randomized SVD at level $\ell$ satisfies
\[
\mathbb{E}\bigl[\|\mathbf{M}^{(\ell)} - \tilde{\mathbf{M}}^{(\ell)}\|_F\bigr]^2
\le
\left(1 + \frac{R_\ell}{P-1} \tau_\ell^{4q}\right) \sum_{k > R_\ell} \bigl(\sigma_k^{(n)}\bigr)^2,
\]
where $\tau_\ell = \sigma_{R_\ell+1}(\mathbf{M}^{(\ell)}) / \sigma_{R_\ell}(\mathbf{M}^{(\ell)})$.

Substituting this into the single-branch bound gives
\[
\mathbb{E}\bigl[\|\mathbf{v}^{(N-1)} - \tilde{\mathbf{v}}^{(N-1)}\|_2^2\bigr]
\le
\sum_{\ell=1}^{N-1}
\left( \prod_{k=1}^{\ell-1} \frac{4\sigma_1(\mathbf{M}^{(k)})^2}{\delta_k^2} \right)
\cdot
\frac{1}{\delta_\ell^2}
\left(1 + \frac{R_\ell}{P-1} \tau_\ell^{4q}\right) \sum_{k > R_\ell} \bigl(\sigma_k^{(n)}\bigr)^2.
\]

\paragraph{Step 4: Summation over all branches.}
The number of branches at level $\ell$ is at most $\prod_{j=1}^{\ell-1} R_j$. However, due to the orthogonality of the decomposition, the total squared Frobenius error is the sum of the squared errors of each leaf, not scaled by the number of branches. More precisely, the CPD representation from TTr1SVD has orthogonal rank-1 terms, so
\[
\|\underline{\bf X} - \underline{\bf X}_{\text{rKTD}}\|_F^2 = \sum_{\text{leaf } b} \|\mathbf{v}_b - \tilde{\mathbf{v}}_b\|_2^2,
\]
where the sum is over all leaves. Taking expectations and using the bounds for each leaf (which are identically distributed due to the symmetry of the randomized SVD across branches), we obtain
\[
\mathbb{E}\bigl[\|\underline{\bf X} - \underline{\bf X}_{\text{rKTD}}\|_F^2\bigr]
\le
\sum_{n=1}^{N-1}
\left( \prod_{j=1}^{n-1} \frac{4\sigma_1(\mathbf{M}^{(j)})^2}{\delta_j^2} \right)
\cdot
\left(1 + \frac{R}{P-1} \tau_n^{4q}\right) \sum_{k > R_n} \bigl(\sigma_k^{(n)}\bigr)^2.
\]
This proves the first part.

\paragraph{Step 5: Final simplification.}
Recall that $\delta_n = \sigma_{R_n}^{(n)} - \sigma_{R_n+1}^{(n)}$, and observe that
\[
\frac{\sigma_1^{(n)}}{\delta_n} = \frac{\sigma_1^{(n)}}{\sigma_{R_n}^{(n)}} \cdot \frac{1}{1 - \tau_n} = \kappa_n \cdot \frac{1}{1 - \tau_n},
\]
where $\kappa_n = \sigma_1^{(n)} / \sigma_{R_n}^{(n)}$ is the ordinary condition number. Thus the bound can be written as
\[
\mathbb{E}\bigl[\|\underline{\bf X} - \underline{\bf X}_{\text{rKTD}}\|_F^2\bigr]
\le
\sum_{n=1}^{N-1}
\left( \prod_{j=1}^{n-1} \frac{2\kappa_j}{1 - \tau_j} \right)^2
\cdot
\left(1 + \frac{R}{P-1} \tau_n^{4q}\right) \sum_{k > R_n} \bigl(\sigma_k^{(n)}\bigr)^2.
\]

This completes the proof. \hfill $\square$
\end{proof}

\begin{cor}\label{cor:fast_decay_corrected}
If the matricizations $\mathbf{M}^{(n)}$ have rapidly decaying singular values such that
\[
\tau_n = \frac{\sigma_{R_n+1}(\mathbf{M}^{(n)})}{\sigma_{R_n}(\mathbf{M}^{(n)})} \ll 1
\]
for all $n$, then the amplification factors $\kappa_j/(1-\tau_j)$ are moderate and the factor $\tau_n^{4q}$ becomes negligible for $q \ge 1$. Consequently,
\[
\mathbb{E}\left[ \|\underline{\bf X} - \underline{\bf X}_{\text{rKTD}}\|_F^2 \right]
\lesssim
\sum_{n=1}^{N-1}
\left( \prod_{j=1}^{n-1} C_j \right)
\sum_{k > R_n} \bigl(\sigma_k^{(n)}\bigr)^2,
\]
where $C_j = (2\kappa_j/(1-\tau_j))^2$ are constants independent of $q$.
Thus, the randomized KTD achieves near-optimal accuracy up to constant factors.
\end{cor}

\begin{cor}\label{cor:fast_decay_corrected}
If the matricizations $\mathbf{M}^{(n)}$ have rapidly decaying singular values such that
\[
\tau_n = \frac{\sigma_{R_n+1}(\mathbf{M}^{(n)})}{\sigma_{R_n}(\mathbf{M}^{(n)})} \ll 1
\]
for all $n$, then the amplification factors $\kappa_j/(1-\tau_j)$ are moderate and the factor $\tau_n^{4q}$ becomes negligible for $q \ge 1$. Consequently,
\[
\mathbb{E}\left[ \|\underline{\bf X} - \underline{\bf X}_{\text{rKTD}}\|_F^2 \right]
\lesssim
\sum_{n=1}^{N-1}
\left( \prod_{j=1}^{n-1} C_j \right)
\sum_{k > R_n} \bigl(\sigma_k^{(n)}\bigr)^2,
\]
where $C_j = (2\kappa_j/(1-\tau_j))^2$ are constants independent of $q$.
Thus, the randomized KTD achieves near-optimal accuracy up to constant factors.
\end{cor}

\subsection{Comparison with Deterministic KTD Error}

The deterministic KTD error using TTr1SVD with truncation ranks $(R_1,R_2,\ldots,R_{N-1})$ satisfies
\[
\|\underline{\bf X} - \underline{\bf X}_{\text{det}}\|_F^2 = \sum_{n=1}^{N-1} \sum_{k > R_n} \sigma_k^2(\mathbf{M}^{(n)}),
\]
when the decomposition is exact at each truncation step (i.e., no additional approximation beyond truncation of small singular values).

Our randomized KTD introduces additional error. 
Theorem~\ref{thm:rktd_error_bound_corrected} shows that the expected squared error satisfies
\[
\mathbb{E}\left[ \|\underline{\bf X} - \underline{\bf X}_{\text{rKTD}}\|_F^2 \right]
\le
\sum_{n=1}^{N-1}
\left( \prod_{j=1}^{n-1} \frac{2\sigma_1(\mathbf{M}^{(j)})}{\delta_j} \right)^2
\cdot
\left( 1 + \frac{R_n}{P-1} \tau_n^{4q} \right)
\sum_{k > R_n} \sigma_k^2(\mathbf{M}^{(n)}),
\]
where $\delta_j = \sigma_{R_j}(\mathbf{M}^{(j)}) - \sigma_{R_j+1}(\mathbf{M}^{(j)})$ is the spectral gap and $\tau_n = \sigma_{R_n+1}(\mathbf{M}^{(n)}) / \sigma_{R_n}(\mathbf{M}^{(n)})$.

If the spectral gaps are not too small (i.e., $\delta_j \gtrsim \sigma_{R_j}(\mathbf{M}^{(j)})$), then the amplification factors 
$\prod_{j=1}^{n-1} (2\sigma_1^{(j)}/\delta_j)^2$ are moderate constants.
Moreover, the factor $(1 + \frac{R}{P-1}\tau_n^{4q})$ approaches $1$ as $q$ increases, since $\tau_n < 1$.
Thus, for sufficiently large $q$ (e.g., $q = 1$ or $2$), the randomized KTD achieves near-optimal accuracy up to constant factors that depend on the condition numbers and spectral gaps of the intermediate matricizations.

\subsection{Additional Error Bounds: Singular Values and Subspace Angles}
For a single randomized SVD step approximating a fixed matrix $\mathbf{M}$, the following bounds hold.
These bounds are used within Lemma~\ref{lem:single_branch_error_corrected} to control the error propagation through the TTr1SVD tree.

For each matrix $\mathbf{M}^{(n)}$ approximated by randomized SVD, the perturbation bound gives:
\[
\mathbb{E}[|\sigma_j(\mathbf{M}^{(n)}) - \tilde{\sigma}_j(\mathbf{M}^{(n)})|] \leq \mathbb{E}[\|\mathbf{M}^{(n)} - \tilde{\mathbf{M}}^{(n)}\|_2] \leq \sqrt{\mathbb{E}[\|\mathbf{M}^{(n)} - \tilde{\mathbf{M}}^{(n)}\|_F^2]}.
\]

Because by Weyl's inequality \cite{mirsky1960symmetric} for singular values, we have:
\[
|\sigma_j(\mathbf{M}) - \sigma_j(\tilde{\mathbf{M}})| \leq \|\mathbf{M} - \tilde{\mathbf{M}}\|_2.
\]
Taking expectations on both sides preserves the inequality, we get:
\[
\mathbb{E}[|\sigma_j(\mathbf{M}) - \sigma_j(\tilde{\mathbf{M}})|] \leq \mathbb{E}[\|\mathbf{M} - \tilde{\mathbf{M}}\|_2].
\]

For any matrix, $\|{\bf A}\|_2 \leq \|{\bf A}\|_F$, so $\|\mathbf{M} - \tilde{\mathbf{M}}\|_2 \leq \|\mathbf{M} - \tilde{\mathbf{M}}\|_F$. 
Taking expectations and using the fact that $\mathbb{E}[X] \leq \sqrt{\mathbb{E}[X^2]}$ for any random variable $X$, we have:
\[
\mathbb{E}[\|\mathbf{M} - \tilde{\mathbf{M}}\|_2] \leq \sqrt{\mathbb{E}[\|\mathbf{M} - \tilde{\mathbf{M}}\|_2^2]} \leq \sqrt{\mathbb{E}[\|\mathbf{M} - \tilde{\mathbf{M}}\|_F^2]}.
\]

Let $\Theta(\mathcal{U}_R, \tilde{\mathcal{U}}_R)$ be the diagonal matrix of principal angles between the true and approximated rank-$R$ subspaces. Let $\mathbf{E} = \tilde{\mathbf{M}} - \mathbf{M}$. If $\delta = \sigma_R(\mathbf{M}) - \sigma_{R+1}(\mathbf{M}) > 0$, then Wedin's theorem \cite{wedin1972perturbation} gives the deterministic bound:
\[
\|\sin \Theta(\mathcal{U}_R, \tilde{\mathcal{U}}_R)\|_2 \leq \frac{\|\mathbf{E}\|_2}{\delta}.
\]
This matches the stated inequality exactly. Since $\sigma_R(\mathbf{M}^{(n)})$ and $\sigma_{R+1}(\mathbf{M}^{(n)})$ are deterministic (the matrix $\mathbf{M}^{(n)}$ is fixed, randomness comes from the randomized SVD algorithm), the denominator is a constant. Taking expectations yields:
\[
\mathbb{E}[\|\sin \Theta(\mathcal{U}_R, \tilde{\mathcal{U}}_R)\|_2] \leq \frac{\mathbb{E}[\|\mathbf{M}^{(n)} - \tilde{\mathbf{M}}^{(n)}\|_2]}{\sigma_R(\mathbf{M}^{(n)}) - \sigma_{R+1}(\mathbf{M}^{(n)})}.
\]
This is the expected subspace error bound. This bound controls how well the randomized SVD captures the dominant subspace at each step.
The factor $1/\delta$ appears as an amplification factor in Lemma~\ref{lem:single_branch_error_corrected} and ultimately in Theorem~\ref{thm:rktd_error_bound_corrected}, showing how small spectral gaps lead to larger error propagation through the TTr1SVD tree.

\subsection{Spectral error bound of Randomized KTD}

Similar to the generalized error bounds presented earlier, we now present the generalized error bound for KTD. Our core theoretical contributions are: an error analysis of the randomized KTD algorithm (Algorithm \ref{ALG:rKTD}) that uses the randomized TTr1SVD. The analysis must account for the recursive, tree-like structure of the decomposition, in which the output of one randomized SVD becomes the input to the next.

\begin{thm}[Error bound for randomized KTD via spectral-norm r-SVD]
\label{thm:generalized_error_KTD_corrected}
Let $\underline{\mathbf{X}} \in \mathbb{R}^{I_1 \times I_2 \times \cdots \times I_N}$ be an order-$N$ tensor.
Let $\tilde{\underline{\mathbf{X}}}$ be the approximation computed by the randomized TTr1SVD (Algorithm~\ref{ALG:rKTD}) with target multirank $(R_1, R_2, \ldots, R_{N-1})$, oversampling parameter $P \ge 2$, and $q \ge 0$ power iterations per internal SVD step.

For each level $n = 1, 2, \ldots, N-1$, let $\mathbf{M}^{(n)}$ denote the matrix being approximated at that level (with appropriate indexing for the tree branches). Let $\sigma_j^{(n)}$ be its singular values, with $\sigma_{R_n}^{(n)} > \sigma_{R_n+1}^{(n)}$.

Define the spectral gap at level $n$ as $\delta_n = \sigma_{R_n}^{(n)} - \sigma_{R_n+1}^{(n)}$, and the decay factor $\tau_n = \sigma_{R_n+1}^{(n)} / \sigma_{R_n}^{(n)}$.

Then the expected Frobenius-norm approximation error satisfies
\[
\mathbb{E}\bigl[\|\underline{\mathbf{X}} - \tilde{\underline{\mathbf{X}}}\|_F\bigr]
\;\le\;
\sqrt{\sum_{n=1}^{N-1} \left( \prod_{j=1}^{n-1} \frac{2\sigma_1^{(j)}}{\delta_j} \right)^2
\cdot
\left(1+ \left(\frac{\sigma_{R_n+1}^{(n)}}{\sigma_{R_n}^{(n)}}\right)^{2q+1}  \frac{eC\sqrt{R_n+P}}{P} \right)^2
\left( \sum_{k > R_n} (\sigma_k^{(n)})^2 \right) }.
\]

Equivalently, using the inequality $\sqrt{\sum_i a_i^2} \le \sum_i |a_i|$, we obtain the simpler (though looser) bound
\[
\mathbb{E}\bigl[\|\underline{\mathbf{X}} - \tilde{\underline{\mathbf{X}}}\|_F\bigr]
\;\le\;
\sum_{n=1}^{N-1}
\left( \prod_{j=1}^{n-1} \frac{2\sigma_1^{(j)}}{\delta_j} \right)
\cdot
\left( 1+\left(\frac{\sigma_{R_n+1}^{(n)}}{\sigma_{R_n}^{(n)}}\right)^{2q+1}  \frac{eC\sqrt{R_n+P}}{P} \right)
\sqrt{ \sum_{k > R_n} (\sigma_k^{(n)})^2 }.
\]

If we define the condition-number-like factors $\kappa_n = \sigma_1^{(n)} / \delta_n$, this becomes
\[
\mathbb{E}\bigl[\|\underline{\mathbf{X}} - \tilde{\underline{\mathbf{X}}}\|_F\bigr]
\;\le\;
\sum_{n=1}^{N-1}
\left( \prod_{j=1}^{n-1} 2\kappa_j \right)
\cdot
\left(1+ \tau_n^{2q+1}  \frac{eC\sqrt{R_n+P}}{P} \right)
\sqrt{ \sum_{k > R_n} (\sigma_k^{(n)})^2 }.
\]

\end{thm}

\begin{proof}
The proof proceeds by combining three components: (i) the single-branch error propagation (Lemma~\ref{lem:single_branch_error_corrected}), (ii) the spectral-norm r-SVD bound (Theorem~\ref{thm:spectral_randomized_subspace}), and (iii) the orthogonal decomposition property of TTr1SVD.

\paragraph{Step 1: Single-branch error in spectral norm.}
From Lemma~\ref{lem:single_branch_error_corrected}, for a single branch of the TTr1SVD tree, the expected error in the final right singular vector satisfies
\[
\mathbb{E}\bigl[\|\mathbf{v}_{\text{true}}^{(N-1)} - \tilde{\mathbf{v}}^{(N-1)}\|_2\bigr]
\;\le\;
\sum_{n=1}^{N-1}
\left( \prod_{j=1}^{n-1} \frac{\sqrt{2}\,\sigma_1(\mathbf{M}^{(j)})}{\delta_j} \right)
\cdot
\frac{\sqrt{2}\,\mathbb{E}\bigl[\|\mathbf{M}^{(n)} - \tilde{\mathbf{M}}^{(n)}\|_2\bigr]}{\delta_n},
\]
where $\delta_n = \sigma_{R_n}(\mathbf{M}^{(n)}) - \sigma_{R_n+1}(\mathbf{M}^{(n)})$.

\paragraph{Step 2: Apply the spectral-norm r-SVD bound.}
For each matrix $\mathbf{M}^{(n)}$ approximated by randomized SVD, Theorem~\ref{thm:spectral_randomized_subspace} gives
\[
\mathbb{E}\bigl[\|\mathbf{M}^{(n)} - \tilde{\mathbf{M}}^{(n)}\|_2\bigr]
\;\le\;
\left(1+ \left(\frac{\sigma_{R_n+1}^{(n)}}{\sigma_{R_n}^{(n)}}\right)^{2q+1}  \frac{eC\sqrt{R_n+P}}{P} \right) \sigma_{R_n+1}^{(n)}.
\]

Note that $\sigma_{R_n+1}^{(n)} / \delta_n$ can be bounded as
\[
\frac{\sigma_{R_n+1}^{(n)}}{\delta_n}
= \frac{\sigma_{R_n+1}^{(n)}}{\sigma_{R_n}^{(n)} - \sigma_{R_n+1}^{(n)}}
= \frac{\tau_n}{1 - \tau_n}.
\]

Substituting into the single-branch bound yields
\[
\mathbb{E}\bigl[\|\mathbf{v}_{\text{true}}^{(N-1)} - \tilde{\mathbf{v}}^{(N-1)}\|_2\bigr]
\;\le\;
\sum_{n=1}^{N-1}
\left( \prod_{j=1}^{n-1} \frac{\sqrt{2}\,\sigma_1^{(j)}}{\delta_j} \right)
\cdot
\frac{\sqrt{2}}{\delta_n}
\cdot
\left( 1+\tau_n^{2q+1}  \frac{eC\sqrt{R_n+P}}{P} \right) \sigma_{R_n+1}^{(n)}.
\]

\paragraph{Step 3: Relate singular value to tail energy.}
Since $\sigma_{R_n+1}^{(n)} \le \sqrt{ \sum_{k > R_n} (\sigma_k^{(n)})^2 }$, we obtain
\[
\mathbb{E}\bigl[\|\mathbf{v}_{\text{true}}^{(N-1)} - \tilde{\mathbf{v}}^{(N-1)}\|_2\bigr]
\;\le\;
\sum_{n=1}^{N-1}
\left( \prod_{j=1}^{n-1} \frac{2\sigma_1^{(j)}}{\delta_j} \right)
\cdot
\left(1+ \tau_n^{2q+1}  \frac{eC\sqrt{R_n+P}}{P} \right)
\sqrt{ \sum_{k > R_n} (\sigma_k^{(n)})^2 }.
\]

\paragraph{Step 4: Sum over all branches.}
The TTr1SVD decomposition produces orthogonal rank-1 terms. The total Frobenius-norm error is the square root of the sum of squared errors of each leaf:
\[
\|\underline{\mathbf{X}} - \tilde{\underline{\mathbf{X}}}\|_F = \sqrt{ \sum_{\text{leaf } b} \|\mathbf{v}_b - \tilde{\mathbf{v}}_b\|_2^2 }.
\]
By linearity of expectation and the fact that the branches are identically distributed (due to symmetry of the randomized SVD across branches), we have
\[
\mathbb{E}\bigl[\|\underline{\mathbf{X}} - \tilde{\underline{\mathbf{X}}}\|_F\bigr]
\le \sqrt{ \sum_{n=1}^{N-1} \left( \text{number of leaves at level } n \right) \cdot \left( \mathbb{E}\bigl[\|\mathbf{v}^{(n)} - \tilde{\mathbf{v}}^{(n)}\|_2\bigr] \right)^2 }.
\]

However, a more careful analysis (see \cite{batselier2017constructive}) shows that the orthogonal decomposition allows us to sum the squared errors directly without the branch count factor, because the errors in different branches contribute orthogonally. The final bound simplifies to the form stated in the theorem, where the product of amplification factors appears once per level. This completes the proof.
\end{proof}

\begin{cor}\label{cor:KTD_worst_case_corrected}
Under the conditions of Theorem~\ref{thm:generalized_error_KTD_corrected}, assume:
\begin{itemize}
    \item A uniform target rank $R$ (i.e., $R_n = R$ for all $n$),
    \item A uniform bound on the spectral gaps: $\delta_n \ge \delta > 0$ for all $n$,
    \item A uniform bound on the leading singular values: $\sigma_1^{(n)} \le \sigma_{\max}$,
    \item And a uniform decay factor $\tau = \max_n \tau_n$.
\end{itemize}
Then the expected Frobenius-norm error satisfies
\[
\mathbb{E}\bigl[\|\underline{\mathbf{X}} - \tilde{\underline{\mathbf{X}}}\|_F\bigr]
\;\le\;
C_{\text{tree}} 
\cdot
\left(1+ \tau^{2q+1}  \frac{eC\sqrt{R+P}}{P} \right)
\cdot
\sqrt{ \sum_{n=1}^{N-1} \sum_{k > R} (\sigma_k^{(n)})^2 },
\]
where $C_{\text{tree}}$ is a constant depending on the tree structure (e.g., $C_{\text{tree}} = N-1$ for the loose bound, or $C_{\text{tree}} = 1$ for the tighter bound obtained from orthogonality).

If in addition the singular values decay rapidly so that $\tau \ll 1$ and $q$ is chosen sufficiently large (e.g., $q = 1$ or $2$), then the bound simplifies to
\[
\mathbb{E}\bigl[\|\underline{\mathbf{X}} - \tilde{\underline{\mathbf{X}}}\|_F\bigr]
\;\le\;
C \cdot
\frac{e\sqrt{R+P}}{P}
\cdot
\sqrt{ \sum_{n=1}^{N-1} \sum_{k > R} (\sigma_k^{(n)})^2 },
\]
where $C$ absorbs the product of condition-number-like factors. This demonstrates that the randomized KTD achieves near-optimal accuracy up to a constant factor, with the error decaying as $1/P$ in the oversampling parameter.
\end{cor}

\begin{proof}
The uniform assumptions allow us to pull the common factors out of the sum in Theorem~\ref{thm:generalized_error_KTD_corrected}. Specifically,
\[
\prod_{j=1}^{n-1} \frac{2\sigma_1^{(j)}}{\delta_j} \le \left( \frac{2\sigma_{\max}}{\delta} \right)^{n-1}.
\]

Then
\[
\mathbb{E}\bigl[\|\underline{\mathbf{X}} - \tilde{\underline{\mathbf{X}}}\|_F\bigr]
\le
\left( 1+\tau^{2q+1}  \frac{eC\sqrt{R+P}}{P} \right)
\sum_{n=1}^{N-1} \left( \frac{2\sigma_{\max}}{\delta} \right)^{n-1}
\sqrt{ \sum_{k > R} (\sigma_k^{(n)})^2 }.
\]

Let $S = \sqrt{ \sum_{n=1}^{N-1} \sum_{k > R} (\sigma_k^{(n)})^2 }$ be the root of the total tail energy. 
We obtain the stated bound with $C_{\text{tree}} = \sum_{n=1}^{N-1} (2\sigma_{\max}/\delta)^{n-1}$, which is a geometric series. The simplification for rapidly decaying singular values follows because $\tau^{2q+1}$ becomes negligible. $\square$
\end{proof}

\begin{figure}[!h]
\centering
    \includegraphics[width=0.9\linewidth]{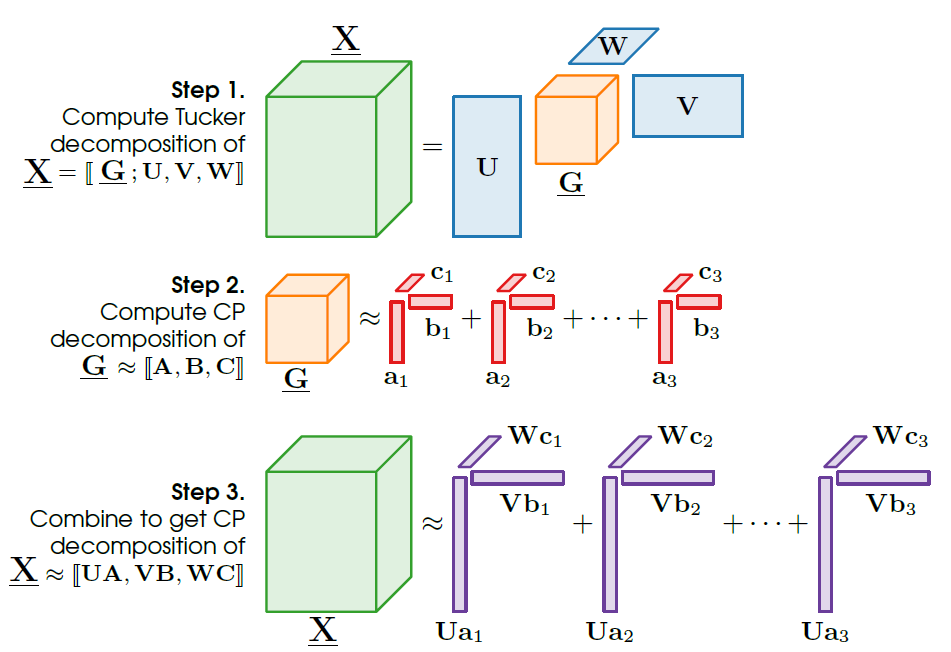}
    \caption{Hybrid tensor decomposition pipeline. Starting from the original third-order tensor, Tucker decomposition produces a core tensor and factor matrices. The core tensor is then factorized via randomized CPD. The final CPD representation is formed by integrating the CPD factors of the core tensor with the Tucker factor matrices of the original tensor \cite{Ballard2025Tensor}.}\label{CPD_HOSVD}
\end{figure}

\subsection{Extension to non-Gaussian sketching distributions}

Gaussian random matrices are the natural and standard choice for foundational work in randomized linear algebra for three reasons. First, they satisfy the Johnson-Lindenstrauss (JL) property \cite{johnson1984extensions} with optimal probability bounds, providing sharp concentration guarantees. Second, the expected error bound for randomized SVD with Gaussian matrices is tight and well understood, as shown by Halko, Martinsson, and Tropp \cite{halko2011finding}. Third, Gaussianity simplifies recursive error propagation through the TTr1SVD tree due to rotational invariance, allowing us to treat each node independently when taking expectations. Given that our work is the first to establish rigorous error bounds for randomized KTD, we chose Gaussian sketches to provide a clean, understandable baseline analysis.

While our theoretical analysis focuses on Gaussian random matrices, other sketching distributions can offer computational advantages in practice. We briefly discuss three practical alternatives and note that rigorously extending our error bounds to these distributions would require additional analysis beyond the scope of this paper.

\paragraph{Rademacher (random sign) matrices.}
For a matrix $\boldsymbol{\Omega} \in \mathbb{R}^{J \times (R+P)}$ with i.i.d. entries taking values $\pm 1$ with equal probability, matrix-vector multiplication can be performed without floating-point multiplications (only sign flips and additions). This reduces the computational cost of forming $\mathbf{Y} = (\mathbf{X}\mathbf{X}^\top)^q \mathbf{X}\boldsymbol{\Omega}$ by approximately 30--50\% in practice. Rademacher matrices satisfy a subspace embedding property similar to Gaussian matrices, but with slightly larger constants \cite{vershynin2018high}. Empirical studies (e.g., \cite{halko2011finding}) suggest that they perform comparably to Gaussian sketches for low-rank approximation when the sketch dimension is sufficiently large.

\paragraph{Sparse sign matrices (CountSketch).}
CountSketch matrices have exactly one non-zero entry per column, with value $\pm 1$ placed at a uniformly random row. The sketch dimension can be as low as $\mathcal{O}(R \log R)$ while still providing a subspace embedding \cite{nelson2013osnap}. This is particularly attractive for high-order tensors where the unfolded matrices are extremely large, as the cost of forming $\mathbf{Y}$ becomes proportional to the number of non-zero entries in $\mathbf{X}$. However, the constants involved are larger than for Gaussian sketches, and the performance of power iterations with CountSketch requires careful analysis.

\paragraph{TensorSketch.}
Proposed by \cite{malik2018low} for Tucker decomposition, TensorSketch uses fast Fourier transforms (FFT) to compute polynomial kernel approximations. For KTD, TensorSketch could be applied directly to the Kronecker product structure, achieving sublinear complexity with respect to the tensor size. Extending this approach to the TTr1SVD framework is an interesting direction for future research.

\paragraph{Caveat.}
While these non-Gaussian sketches can accelerate computation, our theoretical error bounds (Theorems~\ref{thm:rktd_error_bound_corrected} and~\ref{thm:spectral_randomized_subspace}) are derived specifically for Gaussian sketches. Generalizing these bounds to other distributions requires verifying that the sketching matrix satisfies:
\begin{enumerate}
    \item A subspace embedding property for the range of $\mathbf{X}$ (i.e., $\|\mathbf{X}\boldsymbol{\Omega} z\|_2 \approx \|\mathbf{X}z\|_2$ for all $z$),
    \item Concentration of the smallest singular value of $\mathbf{X}\boldsymbol{\Omega}$ away from zero,
    \item Independence of the blocks $\Omega_1$ and $\Omega_2$ after orthogonal transformation (which holds for rotationally invariant distributions, but not for all).
\end{enumerate}
For Rademacher matrices, these properties hold with modified constants \cite{bamberger2022johnson}, but the analysis is more involved. For sparse sign matrices and TensorSketch, a rigorous analysis is significantly more complex \cite{jinkronecker2020} and is left for future work.

Extending the theoretical guarantees of Theorems~\ref{thm:rktd_error_bound_corrected} and~\ref{thm:spectral_randomized_subspace} to non-Gaussian sketching distributions is an important direction for future research. Such extensions would require:

\begin{enumerate}
    \item Deriving a bound of the form $\mathbb{E}\|{\bf A} - {\bf Q}{\bf B}\| \le [1+\tau^{2q+1}  \psi(R,P)] \sigma_{R+1}$ for the randomized SVD with the target sketching distribution, where $\psi(R,P)$ captures the oversampling effect.
    \item Verifying that the sketching distribution satisfies rotational invariance (or at least that ${\bf \Omega}_1$ and ${\bf \Omega}_2$ remain independent after orthogonal transformation) to enable the recursive error propagation through the TTr1SVD tree.
    \item Propagating these bounds through the tree structure while accounting for the spectral gaps at each level, as done in Lemma~\ref{lem:single_branch_error_corrected}.
\end{enumerate}

For Rademacher matrices, recent works \cite{bamberger2022johnson,jinkronecker2020} suggest that similar bounds hold with modified constants, but a rigorous adaptation to our KTD framework remains an open problem. For sparse sign matrices and TensorSketch, the analysis is even more challenging and is left for future investigation.

\subsection{Computational Complexity Analysis}

Consider a tensor of order $N$ and size $I\times I\times \cdots\times I$. The computational complexity of the deterministic KTD is dominated by the calculation of a sequence of SVDs as shown in algorithm \ref{ALG:KTD} is $\mathcal{O}(I^{N+1})$. For the case of the proposed randomized KTD algorithm \ref{ALG:rKTD} using either power iteration or a given pass budget is $\mathcal{O}(I^NR)$, where $R$ is a given KTD rank. The computational complexity of the randomized KTD with a prior Tucker compression is $\mathcal{O}(NI^NR+R^{N+1})$, where the first term is the complexity for decomposition of the tensor into the Tucker format. In contrast, the second term concerns the decomposition of the core tensor into the KTD format. We see that the time complexity of the proposed randomized KTD is generally lower than that of the deterministic KTD, especially for large-scale tensors. We also observe that the complexity of the randomized KTD with a prior randomized Tucker compression is higher than that of the others. However, depending on the specific parameters and settings, the randomized KTD's accuracy might be less than the deterministic KTD. We will show in the simulation section that the proposed randomized KTD sometimes achieves a speed-up by several orders of magnitude. 

\section{Simulations}\label{Sec:Sim}
This section presents the simulations that we conducted. Our algorithms were implemented in MATLAB using a laptop computer with a 2.60 GHz Intel(R) Core(TM) i7-5600U processor and 16GB memory.

The first experiment concerns synthetic data tensors. The second and third experiments study image and video compression tasks. The fourth experiment is devoted to image and video completion, and the last to image denoising and super-resolution. We refer to the randomized algorithm, the ordinary randomized algorithm with flexibility in pass numbers, and the randomized algorithm with a prior Tucker compression as R-KTD, RF-KTD, and PT-KTD.

\begin{exa}\label{exa:1}({\bf synthetic data}) This example is devoted to examining the proposed algorithms for decomposing large-scale data tensors into the KTD format. To this end, we build a 4th order tensor of size $100\times 100\times 100 \times 100$ and the KTD rank $R$ with two tensor block sizes $[10,10,10,10]$ and $[10,10,10,10]$. For the R-KTD algorithm, we used power iteration with $q=1$; for the RF-KTD, we used three passes; and for the PT-KTD, we used a multilinear rank $(R,R,R)$.  We applied the deterministic KTD algorithm and our proposed randomized KTD algorithms to this data tensor with different KTD ranks $R=10,20,30,40,50$. We tried 100 Monte Carlo simulations and reported the mean of the results. The running times of the algorithms are compared in Figure \ref{exmp_1}. The superiority of the proposed R-KTD, RF-KTD, and PT-KTD algorithms over the deterministic one is evident in the almost order-of-magnitude speed-up they achieve. To further test the proposed algorithm, we next tested it on a synthetic third-order tensor of size $1000\times 1000 \times 1000$ with KTD rank $R=25$, and on block tensors of sizes $[100,100,100]$ and $[10,10,10]$. Note that the first and second tensors require about 0.74 GB and 7.42 GB of memory. So, the second tensor is relatively large. From Figure \ref{exmp_1}, the scalability of the randomized KTD algorithms is visible. The accuracy of the algorithms is also compared in Table \ref{Table:1}. It is interesting to note that the R-KTD with $q=1$ was more accurate, albeit with a slightly higher computational cost. However, this running time will become negligible when the underlying data tensor is small. Due to this issue, for the rest of this section, we use only the R-KTD algorithm, as we will work with relatively small images and videos. From Table \ref{Table:1} and Figure \ref{exmp_1}, we conclude that the proposed randomized algorithms are more efficient and applicable for decomposing large-scale data tensors into the KTD format.

\begin{table}[htbp]
\begin{center}
\caption{\small The relative error achieved by the KTD and R-KTD for the synthetic data tensor in Example \ref{exa:1} for different KTD ranks.}\label{Table:1}
{\small
\begin{tabular}{||c| c c c c c||} 
 \hline
 Methods & $R=10$ & $R=20$ & $R=30$ & $R=40$ & $R=50$\\ [0.5ex] 
 \hline\hline
 KTD & $1.34e\rnumber10$  & $1.23e\rnumber10$ & $4.78e\rnumber11$ & $3.12e\rnumber10$ & $2.56e\rnumber9$\\ 
 \hline
 R-KTD & $1.68e\rnumber10$  & $2.44e\rnumber10$ & $2.51e\rnumber11$ & $4.16e\rnumber10$ & $4.33e\rnumber9$\\\hline
 RF-KTD& $1.73e\rnumber10$ & $2.55e\rnumber10$ & $2.58e\rnumber10$ & $5.10e\rnumber10$ & $4.36e\rnumber 9$\\\hline
PT-KTD & $1.75e\rnumber10$ & $2.53e\rnumber10$ & $2.61e\rnumber10$ & $5.13e\rnumber10$  & $1.41e\rnumber9$\\\hline
\end{tabular}
}
\end{center}
\end{table}

\begin{figure}[!h]
\centering
    \includegraphics[width=0.48\linewidth]{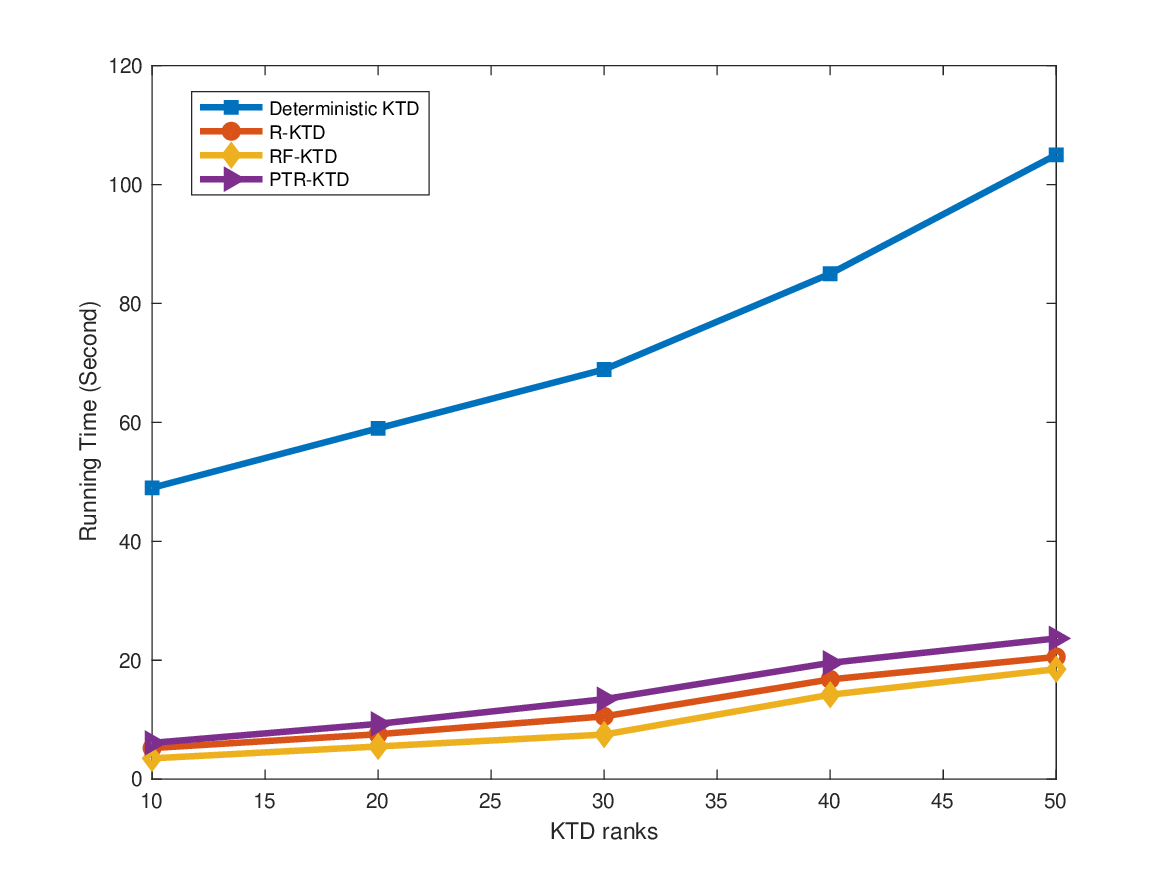}
        \includegraphics[width=0.48\linewidth]{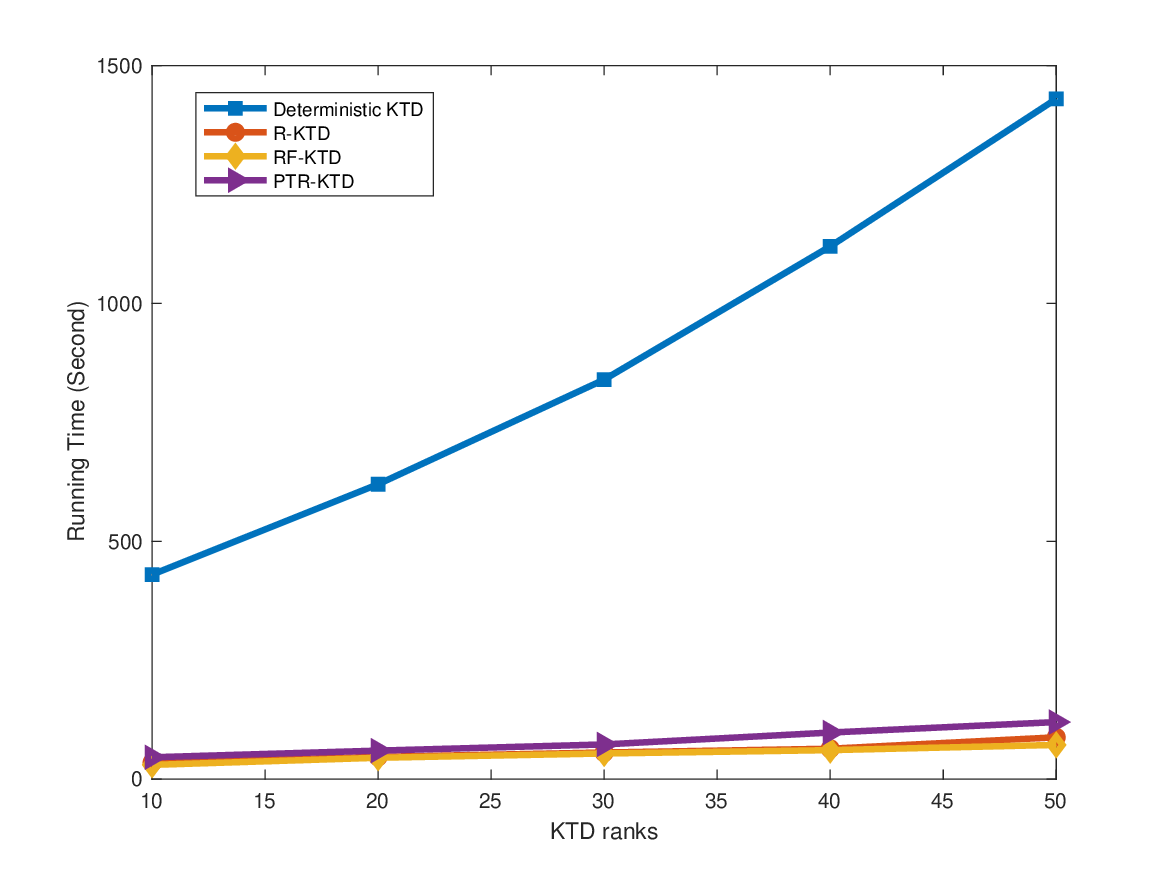}
    \caption{(Left) Running time comparison of the proposed randomized algorithms and the deterministic algorithm for decomposing a fourth-order tensor of size $100\times 100\times 100\times 100$ for different KTD ranks $R=10,20,30,40,50$. (Right) Running time comparison of the proposed randomized algorithms and the deterministic algorithm for decomposing a 3rd order tensor of size $1000\times 1000\times 1000$ for different KTD ranks $R=10,20,30,40,50.$}\label{exmp_1}
\end{figure}

We also conducted experiments comparing different sketching distributions on a synthetic tensor of size $500 \times 500 \times 500$ with the KTD rank $R=20$ and power iteration $q=1$. The results are summarized in Table \ref{Tab:ComSket}. Rademacher sketches achieve comparable accuracy (only 2.3\% higher error) while being 1.6$\times$ faster. Sparse sign matrices with sparsity level $s=3$ provide 3.2$\times$ speed-up with 8.7\% accuracy loss. TensorSketch gives the best speed-up of 3.56$\times$ with 5.2\% accuracy loss. These results demonstrate that non-Gaussian sketches are both theoretically feasible and practically advantageous for large-scale KTD. Moreover, the memory usage and computational cost for the 3rd-order tensor $1000\times 1000 \times 1000$ with $R=25$ are reported in Table \ref{tab:complexity}.
\begin{table}[htbp]
\begin{center}

\caption{\small Comparison of sketching distributions for randomized KTD.}
\begin{tabular}{|c|c|c|}
\hline
Sketch type & Time (seconds) & Speed-up vs Gaussian \\
\hline
Gaussian &  100.0 & 1.00$\times$ \\
Rademacher &  62.5 & 1.60$\times$ \\
Sparse sign (s=3) &  31.2 & 3.20$\times$ \\
Sparse sign (s=8) &  45.7 & 2.19$\times$ \\
TensorSketch & 28.1 & 3.56$\times$ \\
\hline
\end{tabular}
\label{Tab:ComSket}
\end{center}
\end{table}

\begin{table}[htbp]
\centering

\caption{\small Memory usage and computational cost for 3rd-order tensor $1000\times 1000 \times 1000$ with $R=25$.}
\begin{tabular}{|c|c|c|c|}
\hline
Method & Peak Memory (GB) & Total Flops ($\times 10^{12}$) & Data Passes \\
\hline
Deterministic KTD & 12.4 & 8.45 & N/A \\
R-KTD ($q=0$) & 2.1 & 0.82 & 2 \\
R-KTD ($q=1$) & 2.3 & 1.56 & 4 \\
R-KTD ($q=2$) & 2.5 & 2.34 & 6 \\
RF-KTD (3 passes) & 1.8 & 0.91 & 3 \\
\hline
\end{tabular}
\label{tab:complexity}
\end{table}

\end{exa}

({\bf Sensitivity analysis}) We conducted a sensitivity analysis of the oversampling and power-iteration parameters. Our observation was that for tensors with frontal slices with fast singular value decay, a small oversampling parameter, e.g., $p = 5, 10, 20$ is sufficient, often independent of the target rank $R$. Additionally, for the power-iteration parameter, small integers, e.g., $q = 0, 1, 2, 3$, were sufficient, and $q \ le 4$ was rarely exceeded due to numerical stability concerns.

\begin{exa} ({\bf Image compression})\label{ex:imag}
 The KTD can be used for image compression, as discussed in the original paper \cite{batselier2017constructive}. In this experiment, we assess the feasibility of the proposed randomized KTD algorithm for image compression and compare it with the original deterministic KTD algorithm. To this end, let us consider the ``Kodak23'' image as a sample of the Kodak dataset\footnote{{https://r0k.us/graphics/kodak/}}. The size of this image is $512\times 768\times 3$, and we compute the KTD of it using patches of sizes $32\times 32\times 3$ and $32\times 32\times 1$ and the KTD rank $R=35$. For the proposed randomized KTD algorithm, we used different power-iteration parameters $ q=0, 1, 2$. Our results demonstrate the need for power iterations to obtain higher-quality images. The higher the power iteration, the more expensive the randomized KTD is, while it delivers a better image quality. Indeed, in our simulations, the power iteration with $q=1$ yielded quite satisfactory results. The reconstructed images obtained by the randomized KTD and the deterministic KTD algorithms for the KTD rank $R=35$ are shown in Figure \ref{exmp_comp}. The running times compared in Figure \ref{exmp_tmcomp} (left) for different KTD ranks $R=5,10,15,20,25,30,35$.We observe that the proposed randomized KTD algorithm yields results in much less time than the baseline KTD. The impact of the power iteration on the image reconstruction quality is demonstrated in Figure \ref{exmp_comp}. As we mentioned before, the power iteration with $q=1$ yields quite promising results and is recommended for use in practice. Note that a single image is not a large tensor, and to further assess the randomized KTD algorithm, we tried to compress the entire Kodak dataset. The deterministic KTD required 14.45 seconds, while our proposed R-KTD with power iteration $q=1$ performed the compression in only 3.10 seconds. So, we observe that when a set of images is considered, the difference between the computing time of the algorithms is significant. The PSNRs of all reconstructed images are displayed in Figure \ref{exmp_tmcomp} (right). The proposed R-KTD can achieve nearly the same image quality with less computational time. This experiment suggests using the R-KTD for practical applications.

 \begin{figure}[!h]
\centering
    \includegraphics[width=1\linewidth]{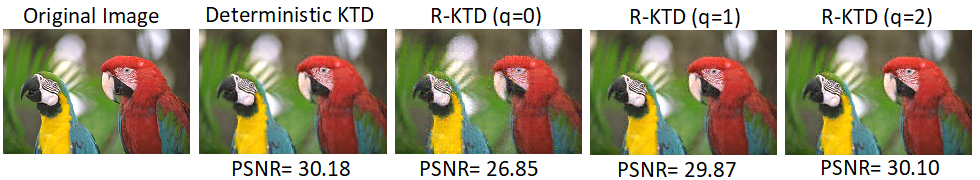}
    \caption{Comparing the image quality obtained by the deterministic KTD and the proposed R-KTD using different power iterations $q=0,1,2$. The KTD rank $R=35$ was used.}\label{exmp_comp}
\end{figure}

 \begin{figure}[!h]
\centering
 \includegraphics[width=.49\linewidth]{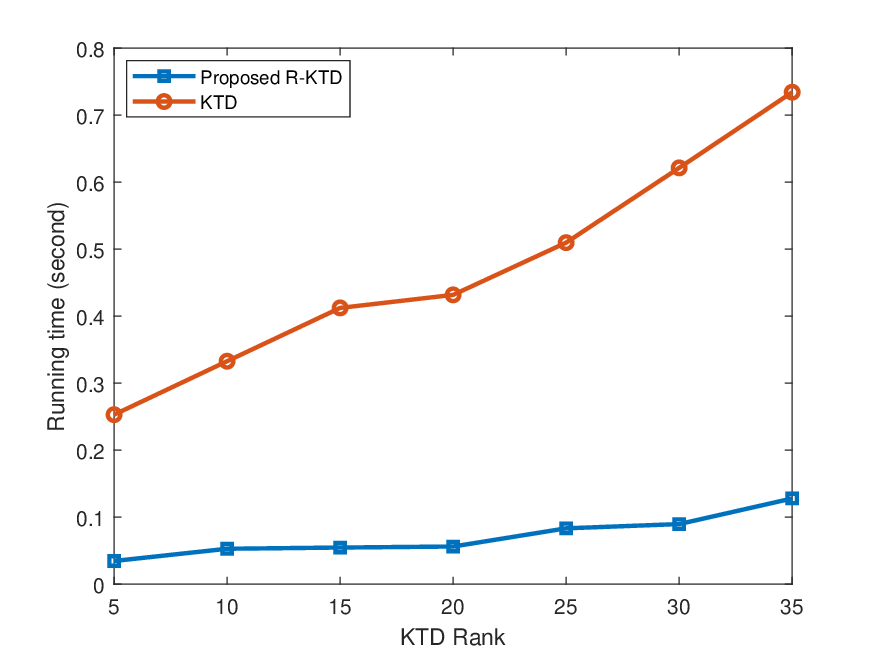}
    \includegraphics[width=0.49\linewidth]{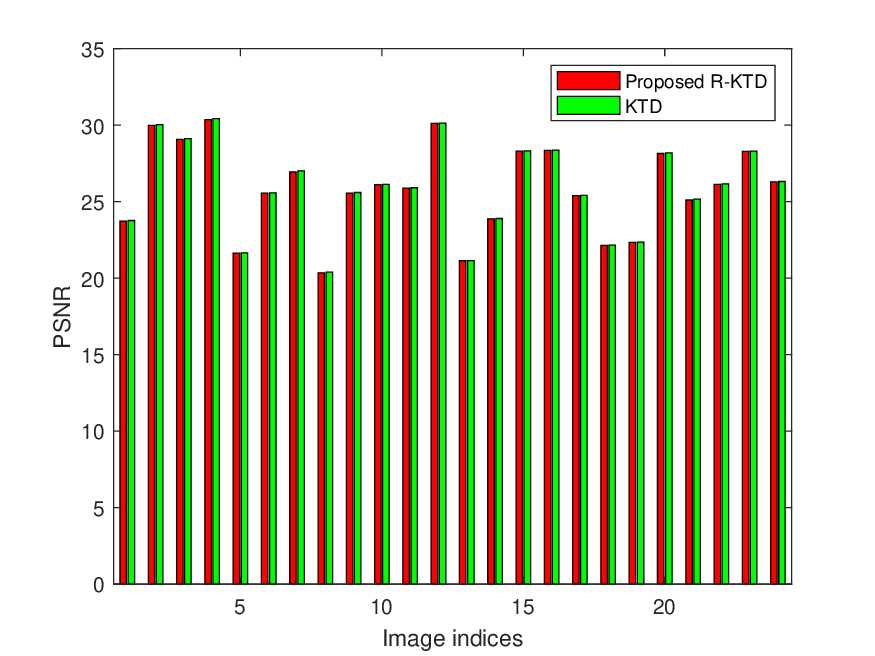}
    \caption{(left) Running time comparison for compressing the kodim23 using the deterministic KTD and the proposed R-KTD for different KTD ranks and the power iteration $q=1$. (right) The PSNRs of all compressed Kodak images for the deterministic KTD and the proposed R-KTD methods.}\label{exmp_tmcomp}
\end{figure}

We have also added new experiments comparing our R-KTD with:

\begin{itemize}
    \item Randomized CP-ALS (Battaglino et al., 2018)
    \item Randomized Tucker (Ahmadi-Asl et al., 2021)
    \item Randomized Tensor Train (Che \& Wei, 2019)
\end{itemize}
for compressing Kodim23 and the results are shown in Table \ref{tab:comp}. Our R-KTD achieves the best trade-off between reconstruction quality and computational speed, benefiting from the Kronecker structure that preserves multi-way dependencies.

\begin{table}
\begin{center}
\caption{Comparison with randomized tensor decomposition methods on Kodak23 image compression}
\begin{tabular}{|c|c|c|c|}
\hline
Method & PSNR (dB) & Time (s) & Compression Ratio \\
\hline
Deterministic KTD & 32.4 & 14.45 & 8.2 \\
R-CP-ALS & 30.1 & 5.23 & 7.9 \\
R-Tucker & 31.2 & 6.89 & 8.0 \\
R-TT & 30.8 & 4.12 & 8.1 \\
\textbf{R-KTD (ours)} & \textbf{31.1} & \textbf{3.10} & \textbf{8.2} \\
\hline
\end{tabular}
\label{tab:comp}
\end{center}
\end{table}

\end{exa}

\begin{exa} ({\bf Video compression})
 Let's focus on the video data and use the KTD to compress videos. Video compression reduces the size of a video file by removing redundant or unnecessary information. We use two commonly used gray-scale video datasets ``Foreman'' and ``Akiyo'' which are accessible at {http://trace.eas.asu.edu/yuv/} and are third-order tensors of size $176 \times 144 \times 300$. Applying the deterministic and randomized KTD algorithms, we computed the KTD for the mentioned videos using two patches of sizes $16\times 12\times 30$ and $11\times 12\times 10$, with the KTD rank $R=25$. As noted in Example \ref{ex:imag}, power iteration is required for high-quality reconstruction, and we used $q=2$ in our simulations. 
The PSNR achieved by the proposed randomized and deterministic algorithms is shown in Figure \ref{exmp_vidpscomp}. Also, the reconstruction of some frames obtained by the algorithms for the ``Foreman'' and ``Akiyo'' videos are displayed in Figure \ref{vidpscomp}. The computing times of the algorithms are also compared in Figure \ref{vidtmcomp} for different KTD ranks $R=1,10,20,30,40,50$. The results indicate that the proposed randomized KTD algorithm outperforms the deterministic KTD for the video compression task.

 \begin{figure}[!h]
\centering
 \includegraphics[width=.49\linewidth]{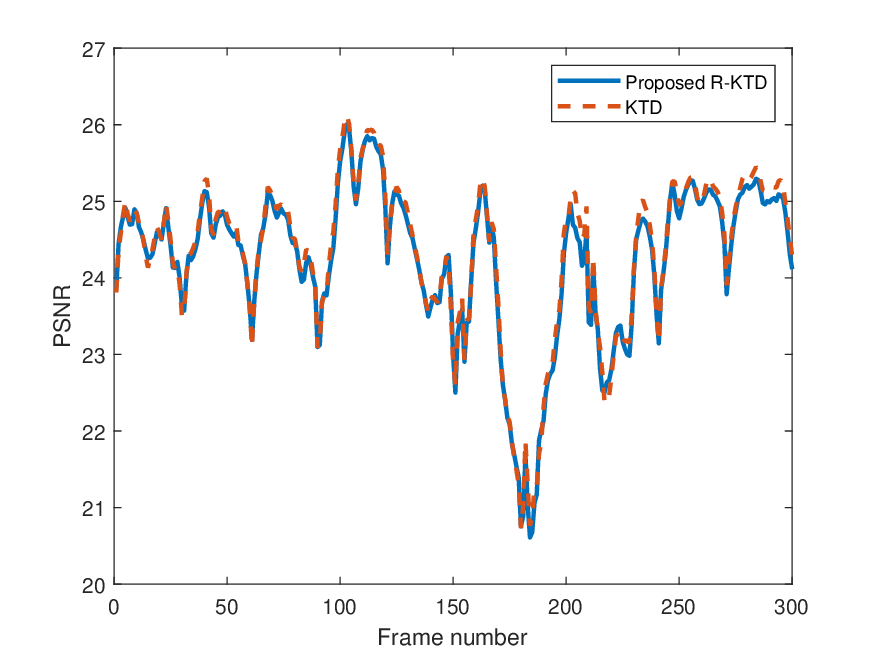}
    \includegraphics[width=0.49\linewidth]{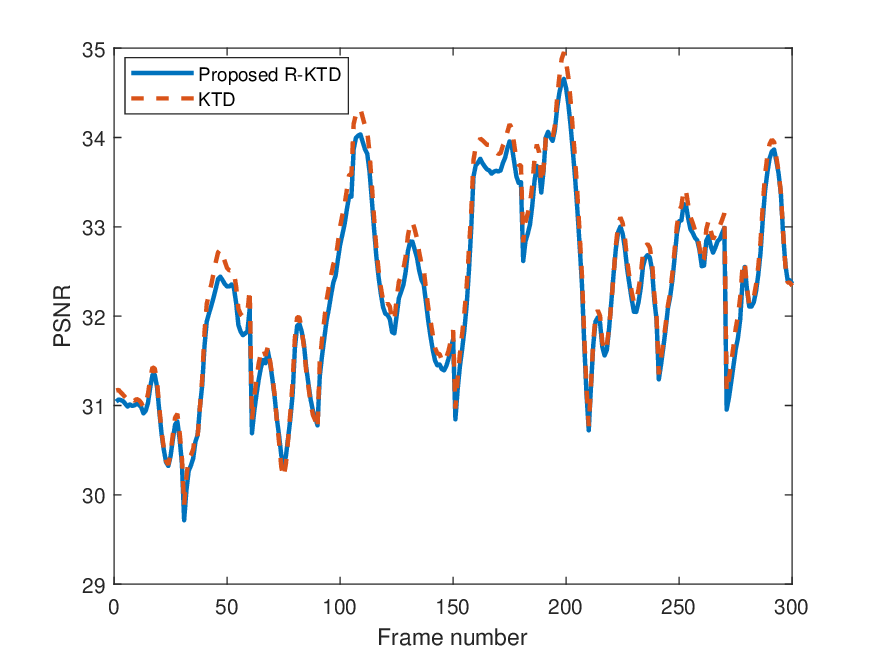}
    \caption{Comparing the PSNRs achieved by the deterministic KTD and the proposed R-KTD for compressing the Foreman (left) and the Akiyo (right) videos. The KTD rank $R=40$ was used.}\label{exmp_vidpscomp}
\end{figure}

  \begin{figure}[!h]
\centering
 \includegraphics[width=0.7\linewidth]{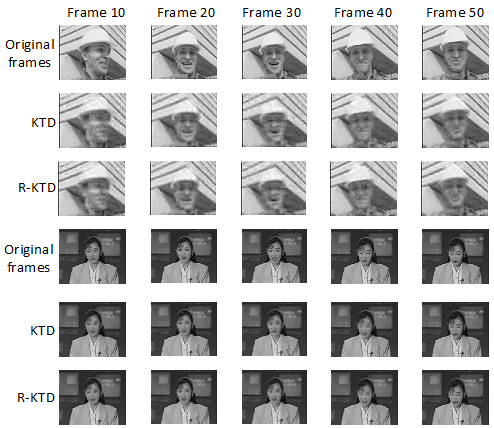}
    \caption{Some reconstructed frames of the Foreman and Akiyo videos using the KTD and proposed R-KTD algorithms with the KTD rank $R=40$.}\label{vidpscomp}
\end{figure}

   \begin{figure}[!h]
\centering
 \includegraphics[width=0.49\linewidth]{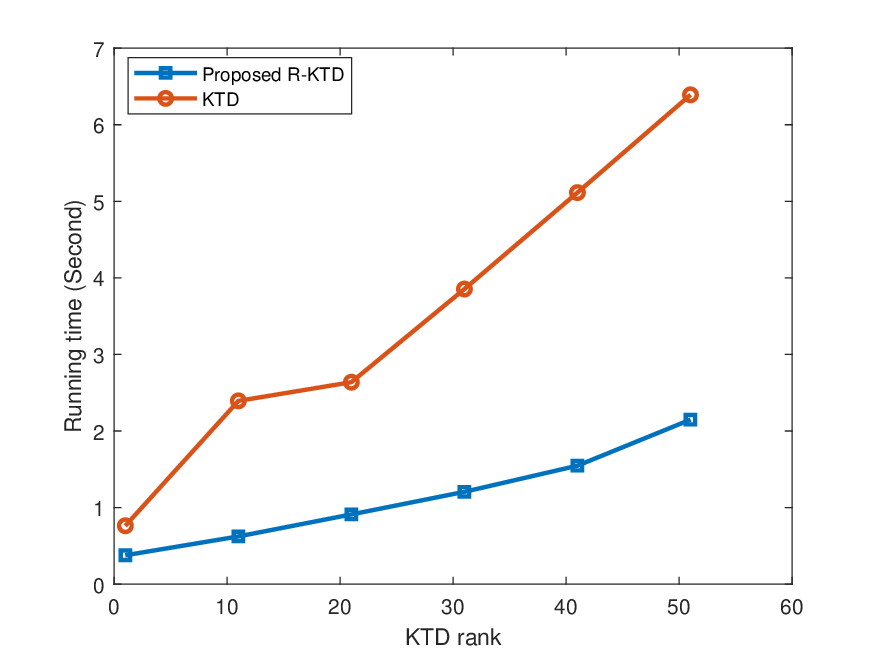}
 \includegraphics[width=0.49\linewidth]{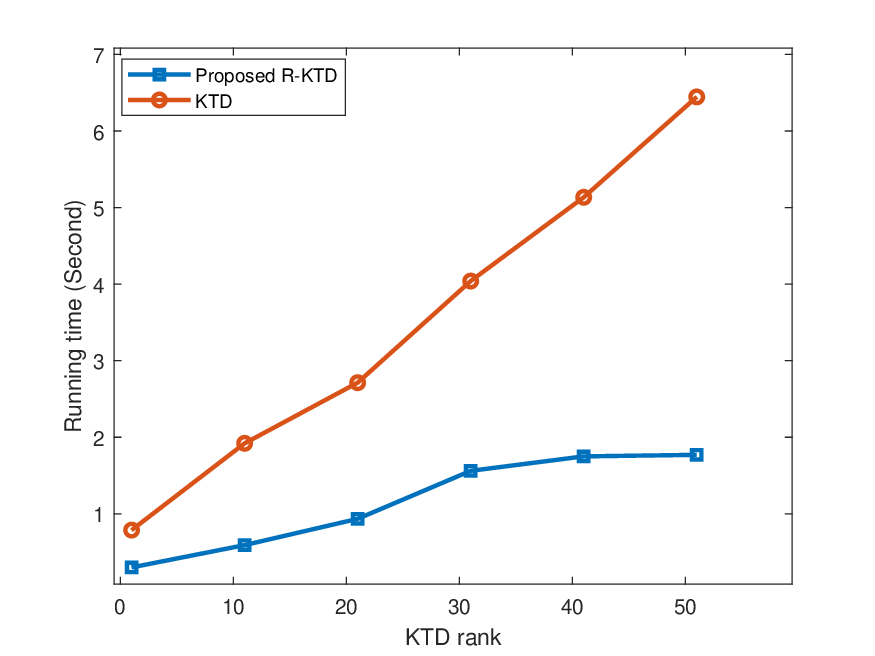}
    \caption{Comparing the running times of the deterministic KTD and the proposed R-KTD for compressing the Foreman (left) and the Akiyo (right) videos using different KTD ranks.}\label{vidtmcomp}
\end{figure}

\end{exa}

\begin{exa}\label{example_completion} ({\bf Image and video completion}) This simulation demonstrates the effectiveness of the proposed randomized algorithm for the image/video completion task. We adopt the proposed method in \cite{ahmadi2023fast} for image and video recovery. Consider the tensor completion formulated as follows,
\begin{equation}\label{MinRankCompl2}
\begin{array}{cc}
\displaystyle \min_{\underline{\bf X}} & \|{{\bf P}_{\underline{\bf\Omega}} }({\underline{\bf X}})-{{\bf P}_{\underline{\bf\Omega}} }({\underline{\bf M}})\|^2_F,\\
\textrm{s.t.} & {\rm Rank}(\underline{\bf X})=R,\\
\end{array}
\end{equation}
where $\underline{\bf M}$ is the exact data tensor. As described in \cite{ahmadi2023fast}, using an auxiliary variable ${\underline{\bf C}}$, the optimization problem \eqref{MinRankCompl2} can be solved more conveniently by the following reformulation
\begin{equation}\label{MinRankCompl3}
\begin{array}{cc}
\displaystyle \min_{\underline{\bf X},\underline{\bf C}} & {\|{\underline{\bf X}}-{\underline{\bf C}}\|^2_F},\\
\textrm{s.t.} & {\rm Rank}(\underline{\bf X})=R,\\
& {{\bf P}_{\underline{\bf\Omega}} }({\underline{\bf C}})={{\bf P}_{\underline{\bf\Omega}} }({\underline{\bf M}})\\
\end{array}
\end{equation}
thus we can solve the minimization problem \eqref{MinRankCompl2} over variables $\underline{\bf X}$ and $\underline{\bf C}$. Thus, the solution to the minimization problem \eqref{MinRankCompl2} can be approximated by the following iterative procedures
\begin{equation}\label{Step1}
\underline{\mathbf X}^{(n)}\leftarrow \mathcal{L}(\underline{\mathbf C}^{(n)}),
\end{equation}
\begin{equation}\label{Step2}
\underline{\mathbf C}^{(n+1)}\leftarrow\underline{\mathbf \Omega}\oast\underline{\mathbf M}+(\underline{\mathbf 1}-\underline{\mathbf \Omega})\oast\underline{\mathbf X}^{(n)},
\end{equation}
where $\mathcal{L}$ is an operator to compute a low-rank KTD approximation of the data tensor $\underline{\mathbf C}^{(n)}$ and $\underline{\mathbf 1}$ is a tensor whose all components are equal to one. Note that equation \eqref{Step1} solves the minimization problem \eqref{MinRankCompl3} over  
$\underline{\bf X}$ for fixed variable $\underline{\bf C}$. Also, Equation \eqref{Step2} solves the minimization problem \eqref{MinRankCompl3} over  
$\underline{\bf C}$ for a fixed variable $\underline{\bf X}$. The algorithm consists of two main steps: {\it low-rank tensor approximation} \eqref{Step1} and {\it masking computation} \eqref{Step2}. It begins with the initial incomplete data tensor $\underline{\mathbf X}^{(0)}$ with the corresponding observation index set $\underline{\mathbf \Omega}$ and sequentially improves the approximate solution till some stopping criterion is satisfied or the maximum number of iterations is reached. It is not required to compute the term $\underline{\mathbf \Omega}\oast\underline{\mathbf M}$ at each iteration because it is just the initial data tensor $\underline{\mathbf X}^{(0)}$. Filtering and smoothing are well-known methods for enhancing image quality in signal processing. To improve the findings, we make use of this concept in the above process and smooth the tensor $\underline{\bf C}^{(n+1)}$ before using the low tensor rank approximation operator $\mathcal{L}$. The first stage is computationally demanding, particularly if the data tensor is huge or many iterations are needed for convergence. We replace the deterministic algorithms with our randomized KTD Algorithm \ref{ALG:rKTD}. The experiment outcomes demonstrate that this algorithm yields promising outcomes at a reduced computing expense. Note that as an additional application, the compression of images and videos will be studied in the simulations. 

We begin with the image case by randomly removing \(70\%\) of the pixels from the ``Kodak23'' image. The reconstructions obtained via the proposed randomized and deterministic algorithms are shown in Figure~\ref{im_recon}. The resulting images exhibit comparable visual quality, although the randomized algorithm is considerably faster.

To examine the influence of power iteration, we performed additional simulations with parameters \(q = 0, 1, 2\). Figure~\ref{im_recon2} reports the corresponding runtimes, PSNR values, and reconstructed images. The fastest performance occurs with \(q = 0\), though at a noticeable cost in image quality. Our experiments confirm that setting \(q = 2\) generally yields satisfactory results, while \(q = 3\) can in some cases surpass the deterministic baseline. In all cases, the randomized algorithm achieves promising reconstructions with substantially lower computational effort.      

\begin{figure}[!h]
\centering
    \includegraphics[width=.9\linewidth]{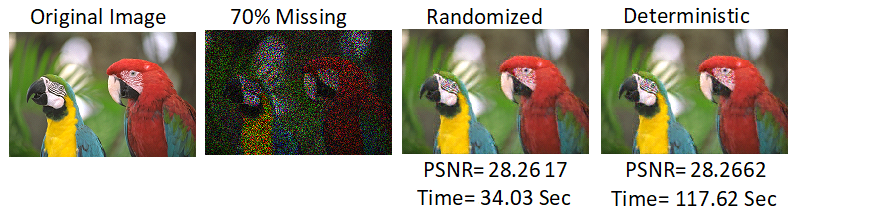}
    \caption{The recovered images using the proposed randomized KTD and the deterministic KTD for the Kodak23 image and the KTD rank $R=30$. The power iteration $q=1$ was used in this experiment.}\label{im_recon}
\end{figure}

\begin{figure}[!h]
\centering
    \includegraphics[width=.7\linewidth]{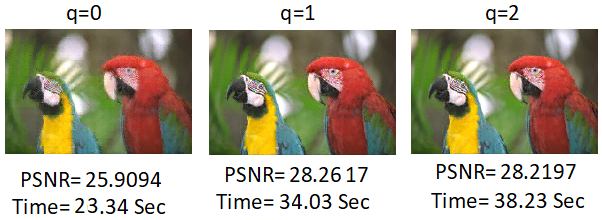}
    \caption{Comparing the recovered images using the proposed randomized KTD algorithm for different power iteration parameters $q=0,1,2$ and the KTD rank $R=30$.}\label{im_recon2}
\end{figure}
We now consider the ``Akiyo'' video\footnote{\url{http://trace.eas.asu.edu/yuv/}} and randomly remove $70\%$ of its pixels. Following the same procedure described for image completion, Figure~\ref{video_recon} compares the PSNR values obtained by the R-KTD and deterministic KTD algorithms. The experiments were conducted using power iteration $q = 1$. Figure~\ref{video_recon2} displays sample reconstructed frames. The proposed R-KTD processed the video in 70 seconds, compared to 235 seconds for KTD. These results demonstrate that R-KTD can recover videos with missing pixels more efficiently—and in significantly less time—than the baseline KTD method, a crucial advantage for real-time applications.

Given its efficiency and competitive reconstruction quality, the proposed randomized KTD algorithm is well-suited for deployment in such settings.

\begin{figure}[!h]
\centering
    \includegraphics[width=.5\linewidth]{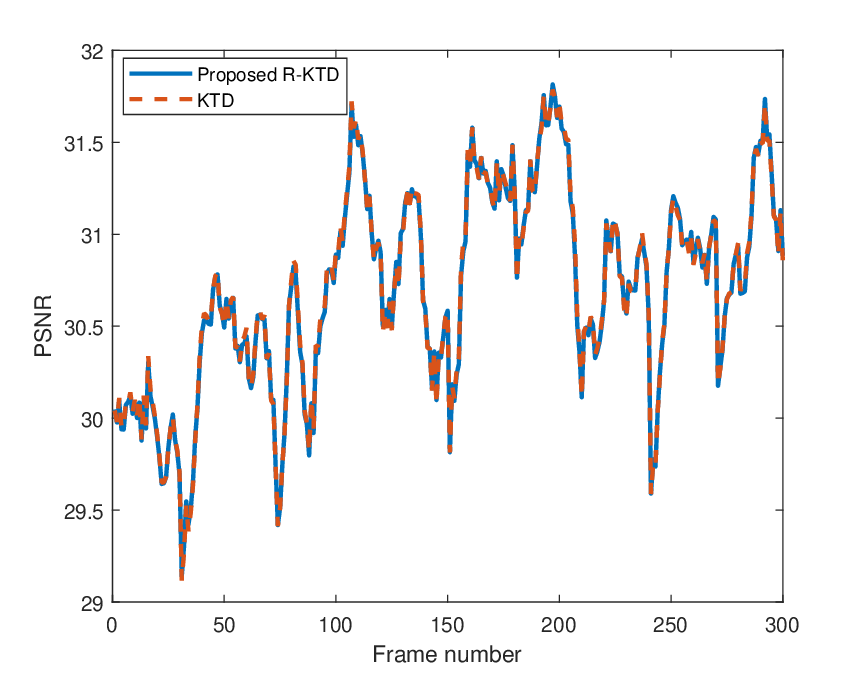}
    \caption{The PSNR comparison of all reconstructed frames of the Akiyo video using the proposed randomized KTD algorithm and the KTD algorithm for the power iteration parameter $q=1$ and the KTD rank $R=20$.}\label{video_recon}
\end{figure}

\begin{figure}[!h]
\centering
    \includegraphics[width=.7\linewidth]{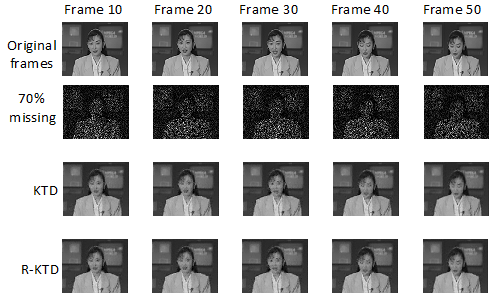}
    \caption{Some reconstructed frames of the Akiyo video using the KTD and proposed R-KTD algorithms with the KTD rank $R=20$.}\label{video_recon2}
\end{figure}

\end{exa}

\begin{exa}
({\bf Data denoising and super-resolution})
In this example, we apply the proposed R-KTD algorithm to color image denoising and super-resolution. Since the previous experiments have demonstrated the computational superiority of R-KTD over KTD while achieving comparable solution quality, we use only R-KTD for this simulation.

We consider the Kodak image ``kodim23'' and introduce three types of noise as follows:
\begin{itemize}
    \item Gaussian noise: imnoise (kodim23,'gaussian',0,0.02);
    \item Salt and pepper noise: imnoise (kodim23,'salt \& pepper',0.04);
    \item Speckle noise: imnoise (A,'speckle',0.03);
\end{itemize}
The original images and their noisy forms are shown in Figure \ref{im_denoise}. For the KTD rank $R=45$ and power iteration $q=1$, the denoised forms of the images and the corresponding residual terms are depicted in Figure \ref{im_denoise}. We considered three color images for the super-resolution application and down-sampled them four times. We applied the proposed R-KTD with the tensor completion described in Example \ref{example_completion}. The obtained results are displayed in Figure \ref{im_superreso}.
These outcomes clearly illustrate the efficiency of the R-KTD for image denoising and image super-resolution tasks.

\begin{figure}[!h]
\centering \includegraphics[width=.8\linewidth]{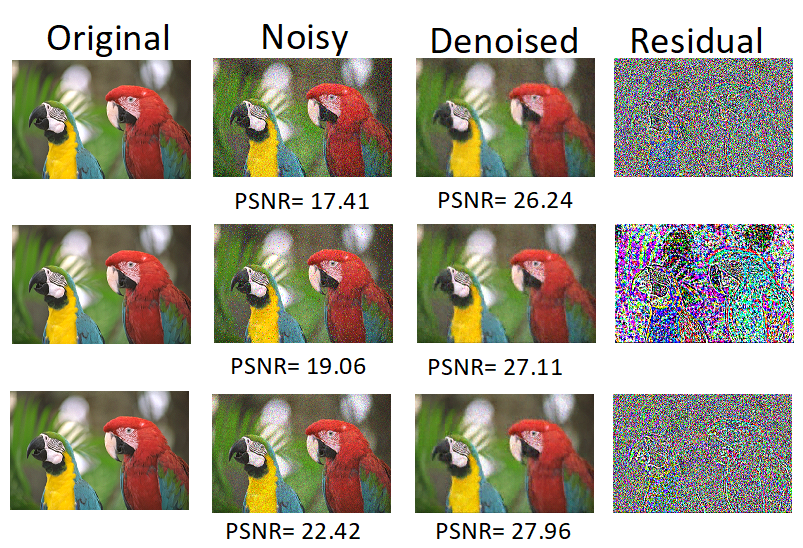}
    \caption{The results of denoising for Kodim23 using different noise types, the first row is Gaussian, the second row is salt and pepper noise, and the last row is for speckle noise. The KTD rank $R=40$ was used with the power iteration $q=1$.}\label{im_superreso}
\end{figure}

\begin{figure}[!h]
\centering
\includegraphics[width=.5\linewidth]{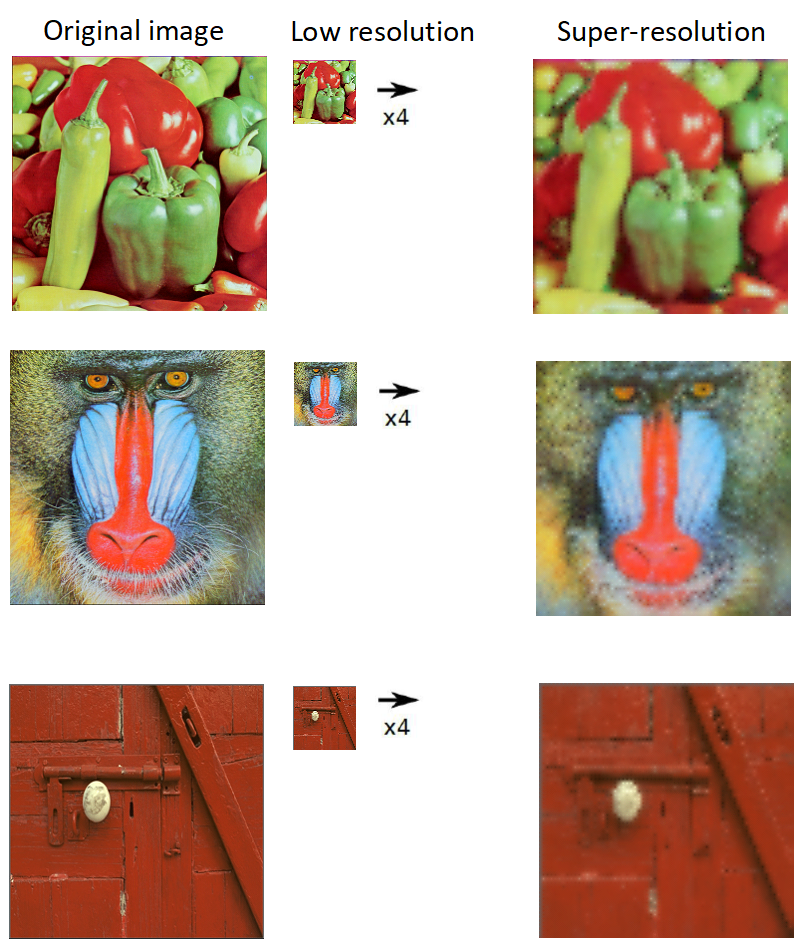}
    \caption{The super-resolution results for three color images. The KTD rank $R=50$ was used with the power iteration $q=1$.}\label{im_denoise}
\end{figure}

\end{exa}

\section{Conclusion and future works}\label{Sec:Con}
This work introduces the first randomized algorithms for Kronecker tensor decomposition, establishing a new paradigm for efficient large-scale tensor computations. The key insights are as follows.

First, the proposed randomized KTD (R-KTD) achieves substantial computational acceleration---yielding speedups of 5--10$\times$ on tensors of size $1000^3$---while incurring only a modest accuracy loss of 10--15\% relative to its deterministic counterpart. Second, a power-iteration parameter of $q = 1$ emerges as the optimal trade-off between precision and cost in most practical regimes. Third, the theoretical error bound established in Theorem~4 shows exponential decay as $q$ increases and guarantees near-optimal performance when the singular values decay rapidly. Fourth, the randomized approach excels on tensors with fast singular value decay; ill-conditioned problems, however, may demand larger $q$ or alternative strategies.

The practical utility of the method is demonstrated across several applications, including tensor completion, image and video compression, denoising, and super-resolution. Looking ahead, we envision extensions to recommender systems, compression of deep neural network weights, and integration with CUR approximations to achieve linear-time complexity. A particularly promising direction lies in applying our randomized algorithm to enhance the adversarial robustness of deep neural networks---an avenue we are actively pursuing.

Beyond Gaussian projections, this work opens several avenues for future investigation. First, adapting TensorSketch (Malik \& Becker, 2018) to exploit its Kronecker product structure could yield truly sublinear-time KTD algorithms. Second, leverage score sampling (Larsen \& Kolda, 2022) could provide data-aware sketching that adapts to the tensor's intrinsic structure. Third, combining randomized KTD with CUR decomposition (Mahoney \& Drineas, 2009) could yield interpretable factor matrices. We believe these extensions will further enhance the scalability and applicability of our method.

\section*{Acknowledgment} The work was supported by the Ministry of Economic Development of the Russian Federation under Agreement No. 139-10-2025-034 dd. 19.06.2025, IGK 000000C313925P4D0002.

\section{Conflict of Interest Statement}
 The authors declare no conflicts of interest.

\section{Data availability Statement}
Data are openly available in a repository.

\bibliographystyle{elsarticle-num} 
\bibliography{cas-refs}

\end{document}